\documentclass[a4paper]{article}
%\documentclass[11pt,a4paper,twoside,final]{scrartcl}
%\pdfoutput=1% force use of pdflatex instead of latex
\usepackage{a4wide}
\usepackage{amsfonts}
\usepackage{amsmath}
\usepackage{mathtools}
\usepackage{amssymb}
\usepackage{array}
\usepackage{multicol}
\usepackage{wrapfig}
\usepackage{amsthm}
\usepackage{dsfont}
\usepackage[utf8]{inputenc}
\usepackage[T1]{fontenc}
\usepackage{caption}
\usepackage{subcaption}
\usepackage{enumerate} 
\usepackage{xifthen}
\usepackage[urlcolor=blue,colorlinks=false]{hyperref}
\usepackage{algorithm}%,algorithmic}
\usepackage{graphicx}
\usepackage{color}
\usepackage{pgf}%,\arrows,pgfnodes} % pgfarrows, pgfnodes obsolete

\usepackage{multirow}
\usepackage{tikz-cd} 
\usepackage{multicol}
\usepackage{tabularx}
\usepackage{diagbox}
\usepackage{longtable}
\usepackage{listings} %Quellcode einbinden
  \lstset{language=matlab,showstringspaces=false,basicstyle={\ttfamily}}%Matlab Quellcode
\usepackage{cleveref}
\usepackage{tikz}
\usetikzlibrary{math}
\usetikzlibrary{patterns}
\usepackage{graphicx}
\usepackage{wrapfig}
\usepackage{changes} %delete Text
\usepackage[export]{adjustbox}
%=========================================

\newcommand{\N}{\ensuremath{\mathbb{N}}}

\renewcommand{\o}{\omega}

\newcommand{\Z}{\ensuremath{\mathbb{Z}}}
\newcommand{\Q}{\ensuremath{\mathbb{Q}}}
\newcommand{\R}{\ensuremath{\mathbb{R}}}
\newcommand{\C}{\ensuremath{\mathbb{C}}}
\newcommand{\K}{\ensuremath{\mathbb{K}}}

\newcommand{\F}{\mathbb{F}}

\newcommand{\Fix}{\operatorname{Fix}}
\newcommand{\ord}{\operatorname{ord}}

\usepackage{algpseudocode}
\usepackage{algorithm}

%=========================================
\setcounter{totalnumber}{4}

\newtheorem{theorem}{Theorem}[]
\newtheorem{lemma}[theorem]{Lemma}
\newtheorem{corollary}[theorem]{Corollary}
\newtheorem{proposition}[theorem]{Proposition}
\newtheorem{remark}[theorem]{Remark}
\newtheorem{definition}[theorem]{Definition}
\newtheorem{example}[theorem]{Example}

\newenvironment{Corollary}{\goodbreak \begin{corollary}\rmfamily}{\end{corollary}}

 %aendert die Label

%\renewcommand{\theequation}{\arabic{section}.\arabic{equation}}
%\parindent0ex

\numberwithin{equation}{section}
\numberwithin{table}{section}
\numberwithin{figure}{section}

\newcommand{\bend}{\hspace*{0ex} \hfill \hbox{\vrule height
    1.5ex\vbox{\hrule width 1.4ex \vskip 1.4ex\hrule  width 1.4ex}\vrule
    height 1.5ex}}

\long\def\symbolfootnote[#1]#2{\begingroup%
\def\thefootnote{\fnsymbol{footnote}}\footnote[#1]{#2}\endgroup}
%=========================================

\crefname{lemma}{Lemma}{Lemmata}
\crefname{definition}{Definition}{Definitions}
\crefname{theorem}{Theorem}{Theorems}
\crefname{corollary}{Corollary}{Corollaries}
\crefname{equation}{}{}
\crefname{remark}{Remark}{Remarks}
\crefname{algorithm}{Algorithm}{Algorithms}
\crefname{chapter}{Chapter}{Chapters}
\crefname{section}{Section}{Sections}
\crefname{table}{Table}{Tables}
\crefname{figure}{Figure}{Figures}
\crefname{example}{Example}{Examples}
\crefname{appendix}{Appendix}{Appendices}
%========================================

\renewcommand{\thefootnote}{\fnsymbol{footnote}}

\allowdisplaybreaks

\title{Real and finite field versions of Chebotar\"ev's theorem}

\author{Tarek Emmrich \and Stefan Kunis}
\date{}

\begin{document}

\maketitle

\begin{abstract}
  Chebotarev's theorem on roots of unity states that all minors of the Fourier matrix of prime size are non-vanishing.
  This result has been rediscovered several times and proved via different techniques.
  We follow the proof of Evans and Isaacs and generalize the original result to a real version and a version over finite fields.
  For the latter, we are able to decrease a sufficient lower bound on the characteristic by Zhang considerably.
  Direct applications include a specific real phase retrieval problem as well as a recent result for Riesz bases of exponentials.
  
\medskip
 	\noindent\emph{Key words and phrases}:
    Discrete Fourier transform, finite fields, minors, Schur polynomials.
   
 	%\medskip
 	\noindent\emph{2020 AMS Mathematics Subject Classification}:
    \text{%
    11T06, %Polynomials over finite fields
    15B33,  %Matrices over special rings (quaternions, finite fields, etc.)
    15B99, %Special matrices
    43A25, %Fourier and Fourier-Stieltjes transforms on locally compact and other abelian groups
    65T50. %Numerical methods for discrete and fast Fourier transforms
      %41A30, %% Approximation by other special function classes
      %05C50. %% Graphs and linear algebra (matrices, eigenvalues, etc.)
 	%05C25, 65T50, 15B52, 05C50}
}
\end{abstract}

%%%%%%%%%%%%%%%%%%%%%%%%%%%%%%%%%%%%%%%%%%%%%%%%%%%%%%%%%%%%%%%%%%%%%%%%%%%%%%%%%%%%%%%%%%%%%%%%%%%%%%%%%%%
%%%%%%%%%%%%%%%%%%%%%%%%%%%%%%%%%%%%%%%%%%%%%%%%%%%%%%%%%%%%%%%%%%%%%%%%%%%%%%%%%%%%%%%%%%%%%%%%%%%%%%%%%%%
\section{Introduction}
Let $p \in \N$ be a prime and $\o_p \in \K$ be a primitive $p$-th root of unity for an arbitrary field $\K$, then we consider
the Fourier matrix
\[
F_p=
\left(\omega_p^{jm}\right)_{{j,m=0,\ldots,p-1}}=
\begin{pmatrix}
        1 & 1 & 1 & \ldots & 1\\
        1 & \omega_p & \omega_p^2 & \ldots & \omega_p^{p-1}\\
        1 & \omega_p^2 & \omega_p^4 & \ldots & \omega_p^{p-2} \\
        1 & \vdots& \vdots & \ddots & \vdots \\
        1 & \omega_p^{p-1} & \omega_p^{p-2} & \ldots & \omega_p
    \end{pmatrix} \in \K^{p \times p},
\]
and ask under which conditions all of its square submatrices are non-singular.
Over the complex number field, the well known theorem by Chebotar\"ev gives an affirmative answer.
\begin{theorem}[See e.g. \cite{Evans,Stevenhagen,frenkel,Tao}; see \cref{sec:pre}]\label{Chebotarev}
   Let $p\in\N$ be prime and $\o_p=\exp(2\pi i / p)\in\C$ be a $p$-th root of unity, then
   all minors of the Fourier matrix \[F_p\in\C^{p\times p}\] are nonzero.
   %For any sets $A,B \in \binom{\{0,1,\ldots,p-1\}}{k}$ the matrix $(F_p)_{A,B}$ has nonzero determinant.
\end{theorem}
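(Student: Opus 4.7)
The plan is to follow the Evans--Isaacs strategy, reducing the nonvanishing of any given minor to a single $p$-coprimality check for a combinatorial dimension integer. Fix row indices $A = \{a_1 < \cdots < a_k\}$ and column indices $B = \{b_1 < \cdots < b_k\}$ in $\{0, \ldots, p-1\}$, and consider the minor $D = \det(\omega_p^{a_i b_j})_{i,j=1}^k \in \Z[\omega_p]$. After reversing the columns (picking up an overall sign $(-1)^{\binom{k}{2}}$) so that the exponents decrease, the bialternant identity factors
\[
    D = \pm\, V(\omega_p^{a_1}, \ldots, \omega_p^{a_k})\, s_{\lambda}(\omega_p^{a_1}, \ldots, \omega_p^{a_k}),
\]
where $V$ is the Vandermonde determinant and $s_\lambda$ is the Schur polynomial attached to the partition $\lambda$ characterised by $\ell_j := \lambda_j + k - j = b_{k-j+1}$. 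The Vandermonde factor is clearly nonzero since the $\omega_p^{a_i}$ are distinct, so the whole problem reduces to showing that the Schur factor is nonzero.

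The next step is to localise at the cyclotomic prime $\pi := 1 - \omega_p$ in $\Z[\omega_p]$, which is the unique prime above $p$ and satisfies $\pi^{p-1} = u\cdot p$ for a unit $u$. Because $\omega_p \equiv 1 \pmod{\pi}$, substituting term-by-term in $s_\lambda$ yields
\[
s_\lambda(\omega_p^{a_1}, \ldots, \omega_p^{a_k}) \equiv s_\lambda(1, \ldots, 1) \pmod{\pi},
\]
so the Schur factor is a $\pi$-adic unit, and in particular nonzero, as soon as the positive integer $s_\lambda(1, \ldots, 1)$ is coprime to $p$. The nonvanishing of $D$ then follows from additivity of $\pi$-adic valuations, with $v_\pi(D) = \binom{k}{2}$ coming entirely from the Vandermonde factor.

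The only genuinely delicate point, and in my view the main obstacle, is this coprimality. I would handle it via the Weyl dimension formula
\[
    s_\lambda(1, \ldots, 1) = \prod_{1 \le i < j \le k} \frac{\ell_i - \ell_j}{j - i},
\]
combined with the observation that $\{\ell_1, \ldots, \ell_k\} = B \subseteq \{0, \ldots, p-1\}$. The numerator is then a product of nonzero differences $b - b'$ with $b, b' \in B$, each lying in $\{1, \ldots, p-1\}$ and hence coprime to $p$; the denominator equals $\prod_{m=1}^{k-1} m^{k-m}$, whose factors are all bounded by $k-1 < p$ and therefore also coprime to $p$. Since the ratio is \emph{a priori} a positive integer, it is coprime to $p$ as required. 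The conceptual heart of the argument is thus recognising that the Schur evaluation at roots of unity, at first sight a rather opaque algebraic quantity, collapses modulo the cyclotomic prime to a classical representation-theoretic dimension whose $p$-arithmetic is laid bare by Weyl's product, with the hypothesis $k \le p$ being exactly what prevents either factor from acquiring a $p$.
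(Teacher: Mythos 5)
Your argument is correct and rests on the same two pillars as the paper's Evans--Isaacs proof: the Schur polynomial factorization of the minor, and Mitchell's product formula for $s_\lambda(1,\ldots,1)$ together with the observation that every factor in that product lies in $\{1,\ldots,p-1\}$ and is hence coprime to $p$. Where you differ is in the algebraic framing of the reduction. The paper introduces the substitution $\alpha_B\colon x_l\mapsto X^{b_l}$ into $\Q[X]$, argues that a vanishing minor forces $\Phi_p(X)\mid\alpha_B(s_A)$ in $\Z[X]$, and only then evaluates at $X=1$ to conclude $p\mid s_A(1,\ldots,1)$. You instead work directly in the ring of integers $\Z[\omega_p]$, use that $\pi=1-\omega_p$ is the totally ramified prime above $p$, and reduce $s_\lambda(\omega_p^{a_1},\ldots,\omega_p^{a_k})$ modulo $\pi$ via $\omega_p\equiv1$. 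The two devices are interchangeable for this statement: reduction mod $\pi$ in $\Z[\omega_p]$ is precisely evaluation of a $\Z[X]$-representative at $X=1$ followed by reduction mod $p$. Your $\pi$-adic phrasing is arguably the more direct one for the complex case, as it bypasses the univariate substitution map and the commuting diagram of \cref{CommutingSchurDiagramm}. What the paper's polynomial-divisibility route buys is portability: exactly the same substitution-and-divisibility setup carries over to the real subfield $\Q(\omega_p+\omega_p^{-1})$ in \cref{sect:real} and to the finite-field situation in \cref{sec:finitefields}, where one must track divisibility by an irreducible factor $\overline{P(X)}$ of $\overline{\Phi_p(X)}$ in $\F_q[X]$ and the unique-ramified-prime picture does not transfer. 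One cosmetic remark: the computation $v_\pi(D)=\binom{k}{2}$ is correct but superfluous here, since nonvanishing of $D$ already follows from both factors of the bialternant decomposition being nonzero.
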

We will generalize this theorem to a real version and a version over finite fields as follows:
\begin{theorem}[See \cref{sect:real}]\label{th:introReal}
    Let $p\in\N$ be prime and $\o_p=\exp(2\pi i / p)\in\C$ be a $p$-th root of unity, then
    all minors of the real Vandermonde matrix 
    \begin{align*}
        \left(\left(\omega_p^j+\omega_p^{-j}\right)^m\right)_{{j,m=1,\ldots,\frac{p-1}{2}}}
     \in\R^{\frac{p-1}{2}\times\frac{p-1}{2}}   
    \end{align*}
    %\[
    %\left(\cos^m\left(2 j \pi / p\right)\right)_{{j,m=1,\ldots,\frac{p-1}{2}}}
    %\in\R^{\frac{p-1}{2}\times\frac{p-1}{2}}
    %\]
    are nonzero. 
\end{theorem}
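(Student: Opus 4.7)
The plan is to adapt the $p$-adic technique of Evans and Isaacs to the totally real subfield $K=\Q(\eta)$ of $\Q(\o_p)$, where $\eta=\o_p+\o_p^{-1}$. Writing $\eta_j=\o_p^j+\o_p^{-j}$ and fixing row and column index sets $J=\{j_1<\ldots<j_k\}$, $M=\{m_1<\ldots<m_k\}\subseteq\{1,\ldots,(p-1)/2\}$, I would first apply the Jacobi bialternant identity to factor the minor as
\[
\det\bigl(\eta_{j_i}^{m_l}\bigr)_{i,l} \;=\; s_\lambda(\eta_{j_1},\ldots,\eta_{j_k})\cdot\prod_{1\le i<i'\le k}(\eta_{j_{i'}}-\eta_{j_i}),
\]
where $\lambda$ is the partition with parts $\lambda_l = m_{k+1-l}-(k-l)$; the constraint $m_l\ge 1$ forces $\lambda_k\ge 1$, so $\lambda$ is genuine with $k$ nonzero parts.

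Next, I would work inside $\mathcal{O}_{K_p}$, the completion of the ring of integers of $K$ at the unique prime above $p$. Since $p$ is totally ramified in $K$ with index $(p-1)/2$, and since the extension $\Q(\o_p)_p/K_p$ has ramification index $2$, every element of $K_p$ has even valuation with respect to $1-\o_p$ in $\Q(\o_p)_p$. Combining $\eta_j-2=-(1-\o_p^j)(1-\o_p^{-j})$ with $(1-\o_p^j)/(1-\o_p)\equiv j\pmod{1-\o_p}$, one checks that $u:=\eta-2$ is a uniformizer of $\mathcal{O}_{K_p}$ and obtains the crucial congruence
\[
\eta_j \;\equiv\; 2 + j^2 u \pmod{u^2}.
\]
From this, the Vandermonde factor has $u$-adic valuation exactly $\binom{k}{2}$, with leading coefficient $\prod_{i<i'}(j_{i'}-j_i)(j_{i'}+j_i)$; every factor lies in $\{1,\ldots,p-2\}$ and is therefore a unit modulo $p$. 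The Schur factor is congruent modulo $u$ to $s_\lambda(2,\ldots,2) = 2^{|\lambda|}\prod_{l<l'}(m_{l'}-m_l)/(l'-l)$ by Weyl's dimension formula, which is also a unit modulo $p$ since $p$ is odd and all the integers appearing in numerator and denominator lie strictly between $0$ and $p$.

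Combining the two estimates, the minor is a nonzero element of $\mathcal{O}_{K_p}$ with $u$-adic valuation exactly $\binom{k}{2}$ and leading coefficient a unit in $\mathcal{O}_{K_p}/(u)\cong\F_p$, hence in particular nonzero in $\R$. The main technical obstacle will lie in the second step: correctly identifying a uniformizer of the real subfield and promoting the approximation $\eta_j\equiv 2+j^2u$ from $\Q(\o_p)_p$ down to $K_p$, by exploiting the parity of valuations of real elements. Once this congruence is in place, the rest of the argument reduces to the familiar observation that all differences and sums of indices in $\{1,\ldots,(p-1)/2\}$ stay coprime to~$p$, which is automatic.
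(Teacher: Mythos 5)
Your proposal is correct, and it takes a genuinely different route from the paper. The paper follows Evans--Isaacs's \emph{global} divisibility argument: reduce the minor to the non-vanishing of $s_A(\eta_{b_1},\ldots,\eta_{b_k})$, rewrite this via the Chebyshev-type recursion $\varphi_j$ (\cref{Galoisgroupcosine}) as a univariate polynomial over $\Q$, observe that vanishing forces the minimal polynomial $P$ of $\eta=2\cos(2\pi/p)$ to divide it in $\Q[X]$, then substitute $X\mapsto 2$: since $\varphi_j(2)=2$ and $P(2)=p$ (\cref{evaluationCosineMinPoly}, proved via $T_p$), Schur positivity yields a quantity coprime to $p$, contradiction. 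Your argument is instead \emph{local}: completing at the unique prime of $K=\Q(\eta)$ above $p$, you exhibit $u=\eta-2$ as a uniformizer, establish $\eta_j\equiv 2+j^2u\pmod{u^2}$, and read off the exact $u$-adic valuation $\binom{k}{2}$ of the minor from the bialternant factorization, with the Schur factor a $p$-adic unit. The two proofs share the same heart --- Schur positivity and the collapse of all nodes to $2$, at which one ``sees'' the prime $p$ (you via total ramification, the paper via $P(2)=p$, which is the same fact) --- but your route dispenses with the explicit Chebyshev lemmata and moreover pins down the exact $p$-adic order of the minor, which is strictly more information than mere non-vanishing, at the cost of invoking completions and ramification theory. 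All the technical steps you flag do go through: parity of $(1-\o_p)$-valuations for elements of $K_p$ lets the residue computation descend from $\Q(\o_p)_p$ to $K_p$, the differences $j_{i'}\pm j_i$ lie in $\{1,\ldots,p-2\}$, and the ratio $\prod(m_{l'}-m_l)/\prod(l'-l)$ is an integer whose $p$-adic valuation is zero because every individual factor is $<p$. One small attribution slip: Evans--Isaacs's proof is the global divisibility argument that the paper follows, not a $p$-adic one.
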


\begin{theorem}[Informal version; see \cref{sec:finitefields}]\label{th:introFiniteF}
    Let $p,q\in\N$ be two distinct primes, $\ord_p(q)=p-1$, $\o_p\in\F_{q^{p-1}}$ be a $p$-th root of unity, and $q$ sufficiently large, then all minors of the Fourier matrix \[F_p\in\F_{q^{p-1}}^{p\times p}\]
    are nonzero. 
\end{theorem}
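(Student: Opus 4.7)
The plan is to adapt the proof of Evans and Isaacs (\cref{Chebotarev}) by replacing its mod $(1-\omega_p)$ reduction step, which trivialises in characteristic $q\neq p$ since $1-\omega_p$ is then a unit, with control of the integer resultant of a minor against the cyclotomic polynomial $\Phi_p$. First I would invoke the Evans--Isaacs factorisation
\begin{equation*}
    f_{A,B}(x):=\det\bigl(x^{a_ib_j}\bigr)_{i,j=1}^k=\pm\prod_{1\le i<j\le k}\bigl(x^{a_j}-x^{a_i}\bigr)\cdot s_{\lambda(B)}\bigl(x^{a_1},\ldots,x^{a_k}\bigr)\in\Z[x],
\end{equation*}
where $A=\{a_1,\ldots,a_k\}$, $B=\{b_1<\cdots<b_k\}$, and $\lambda(B)$ is the partition derived from $B$. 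Evaluated at a primitive $p$-th root of unity $\omega_p\in\F_{q^{p-1}}$, the Vandermonde factor is a product of nonzero elements of $\F_{q^{p-1}}$, since the $a_i$ are distinct modulo $p$ and $\omega_p$ has order $p$. The entire claim therefore reduces to showing $s_{\lambda(B)}(\omega_p^{a_1},\ldots,\omega_p^{a_k})\neq 0$ in $\F_{q^{p-1}}$.

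Next, I would translate nonvanishing into a divisibility condition on a specific integer. Working in characteristic $0$, set
\begin{equation*}
    N_{A,B}:=\mathrm{Res}_x\bigl(\Phi_p(x),f_{A,B}(x)\bigr)=\prod_{c=1}^{p-1}f_{A,B}\bigl(\exp(2\pi\ii c/p)\bigr)\in\Z.
\end{equation*}
By \cref{Chebotarev} each factor is nonzero, hence $N_{A,B}\neq 0$. Reducing modulo $q$, the vanishing $f_{A,B}(\omega_p)=0$ in $\F_{q^{p-1}}$ is equivalent to $\bar\Phi_p$ and $\bar f_{A,B}$ sharing a root in $\overline{\F_q}$, that is, to $q\mid N_{A,B}$. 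The theorem thus becomes: for every admissible pair $(A,B)$, $q\nmid N_{A,B}$.

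Third, I would bound the prime factors of $N_{A,B}$. Multiplicativity of the resultant, together with $N_{\Q(\omega_p)/\Q}(1-\omega_p)=\Phi_p(1)=p$ and the observation that each difference $\omega_p^{a_j}-\omega_p^{a_i}$ generates the ramified prime ideal $(1-\omega_p)\subset\Z[\omega_p]$, gives $N_{A,B}=\pm p^{\binom{k}{2}}N_s$, where $N_s$ is the resultant corresponding to the Schur factor. The Evans--Isaacs congruence $s_{\lambda(B)}(\omega_p^{a_i})\equiv s_{\lambda(B)}(1,\ldots,1)\pmod{(1-\omega_p)}$ in $\Z[\omega_p]$, together with the hook-content formula $s_{\lambda(B)}(1,\ldots,1)=\prod_{i<j}(b_j-b_i)/\prod_{i<j}(j-i)$ whose numerator and denominator involve only positive integers strictly less than $p$, shows that $s_{\lambda(B)}(\omega_p^{a_i})$ is a $(1-\omega_p)$-adic unit, whence $p\nmid N_s$. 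The triangle inequality on the unit circle then gives $|N_s|\leq s_{\lambda(B)}(1,\ldots,1)^{p-1}\leq(p-1)^{(p-1)\binom{k}{2}}$, yielding an explicit threshold on $q$ beyond which $q\nmid N_{A,B}$.

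The main obstacle is obtaining a sharp enough bound: the paper promises a considerable improvement over Zhang's threshold, and the naive estimate above is clearly too crude. I expect the refined argument to exploit either Galois-theoretic cancellation among the conjugate factors $s_{\lambda(B)}(\omega_p^{ca_i})$ in $N_s$ or a finer evaluation of Schur polynomials at roots of unity via character identities or Weyl-type alternants. A pleasant by-product of the resultant framework is that no order condition between $p$ and $q$ appears: the hypothesis $\omega_p\in\F_{q^{p-1}}$ already forces $q\neq p$ and is independent of how $\Phi_p$ factors over $\F_q$.
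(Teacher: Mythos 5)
Your resultant framework is sound as far as it goes: the Evans--Isaacs factorisation, the reduction to the Schur factor, the identity $N_{A,B}=\pm p^{\binom{k}{2}}N_s$, and the $(1-\omega_p)$-adic argument showing $p\nmid N_s$ are all correct, and they do establish the informal statement that all minors are nonzero once $q$ is large enough. Your remark that the resultant viewpoint dispenses with any order condition on $q$ modulo $p$ also agrees with the paper. The problem is quantitative. Your threshold is $q>s_{\lambda(B)}(1,\ldots,1)^{p-1}$, which is exponentially worse than Zhang's $q>\max_A s_A(1,\ldots,1)$, while the theorem is supposed to be a considerable \emph{improvement} over Zhang. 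Nothing in the product $N_s=\prod_{c=1}^{p-1}s_\lambda(\omega_p^{ca_1},\ldots,\omega_p^{ca_k})$ forces it to be small, and the ``Galois-theoretic cancellation among conjugate factors'' you speculate about is not, in fact, what makes the paper's bound small.

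The paper's device is to replace your multiplicative average over Galois conjugates (the resultant, a product) by an \emph{additive} one. It first shows that for every $l\in(\F_p)^\times$ the minor indexed by $(A,B)$ vanishes iff the one indexed by $(A,\,l\cdot B)$ does (\cref{shiftIndicesCharq}), and then examines the sum $\sum_{l=1}^{p-1}\alpha_{l\cdot B}(s_A)$ inside $\Z[X]/\Phi_p(X)$. Because $\sum_{l=1}^{p-1}X^{lz}$ equals $p-1$ if $p\mid z$ and $-1$ otherwise, this sum collapses to the single rational integer
\begin{equation*}
m_{A,B}\,p-\frac{\prod_{i<j}(a_j-a_i)}{\prod_{i<j}(j-i)},
\end{equation*}
where $m_{A,B}$ counts (with multiplicity) the monomials $x^\mu$ of $s_A$ with $p\mid\langle\mu,B\rangle$ (\cref{EvaluationSumOfSchur}). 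If every shifted minor $(A,l\cdot B)$ vanished, $q$ would have to divide this integer; so it suffices to take $q>\max_{A,B}\,|m_{A,B}\,p-s_A(1,\ldots,1)|$, a bound of order $p\cdot s_A(1,\ldots,1)$ at worst and in practice far below even Zhang's $\Gamma$. It is this swap from a $(p-1)$-fold product of cyclotomic integers to a single trace-like linear combination, not cancellation inside your product $N_s$, that is the missing idea in your write-up.
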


The matrix in \cref{th:introReal} is a real Vandermonde matrix and the result follows by understanding the Galois group of the field extension $\Q \subseteq \Q(\o_p+\o_{p}^{-1}) \subseteq \Q(\o_p)$.
Further note that Vandermonde matrices are known to be totally positive for positive nodes, see e.g.~\cite{Pi10} or \cref{Schurevaluation}, and there is also a variant for a discrete cosine transform matrix which easily follows directly from \cref{Chebotarev}. 

Regarding \cref{th:introFiniteF}, we note the following.
\begin{enumerate}
    \item In \cite{FiniteFieldChebFalse} it has been falsely stated that the criterion $\ord_p(q)=p-1$ is a sufficient condition (without any lower bound on $q$);
    %for all minors of $F_p \in (\F_{q^{p-1}}){p \times p}$ to be nonzero.
    Zhang \cite[Example 4.1,4.2]{Zhang} gave counterexamples for $p=11, q=2$ and $p=41, q=11$.
    \item Zhang \cite[Theorem A]{Zhang} proved \cref{th:introFiniteF} under the conditions $\ord_p(q)=p-1$ and $q > \Gamma$, for some large $\Gamma$.
    \item Recently, Pfander, Caragea, Lee, and Malikiosis \cite{PfCaLeMa25} improved Zhang's result by getting rid of the criterion $\ord_p(q)=p-1$.
    \item Here, we decrease the bound $\Gamma$ on the characteristic $q$ considerably.
\end{enumerate}

%???For any matrix $M$, denote by $M_{A,B}$ the restriction of $M$ to the rows $A$ and the columns $B$.

%For fields of positive characteristic $q \in \N$ we need to be a bit more precise, already with the definition of the Fourier matrix $F_n$. First of all we need to find a primitive $n$-th root of unity $\o_n$. Such a primitive $n$-th root is always contained in the field $\F_{q^{n-1}}$ and thus we define the Fourier matrix $F_n$ as the Vandermonde matrix
% \[
% F_p=V(1,\o_n,\o_n^2,\ldots,\o_n^{n-1}) \in (\F_{q^{n-1}})^{n \times n},
% \]
%as a natural analogue to the definition of the Fourier matrix over the complex numbers. A more delicate analysis will later show that the matrix entries actually already are contained in the field $\F_{q^r}$ for $r=\ord_p(q)$.
%In \cite{FiniteFieldChebFalse} it has been falsely stated that the criterion $\ord_p(q)=p-1$ is already sufficient for all minors of the Fouriermatrix $F_p$ in characteristic $q$ to be nonvanishing. Zhang \cite{Zhang} found this error and gave an additional condition, i.e., $q > \Gamma$, for some large $\Gamma$, that implies the nonvanishing minor property of $F_p$.

% In \cref{sec:finitefields} we further improve the results, i.e.,
% \begin{enumerate}
%     \item We get rid of the order condition.
%     \item We give an improved bound on $q$. 
% \end{enumerate}

Beyond scientific curiosity, our results are motivated by the following applications: The real versions in \cref{VandermondeCosine} and \cref{rem:Cos} are of interest if one wishes to do computations avoiding complex arithmetic. In particular, the complex Fourier matrix is \emph{an} eigenvector matrix of the real symmetric graph Laplacian of the ring graph, whereas the real eigenvectors consisting of sine/cosine-pairs are the more natural choice. For the analysis of sparse signals  on (more general) graphs, we refer to \cite{Em24,EmJuKu25}. A second application is in real phase retrieval for sparse polynomials in monomial or Chebyshev basis.
The finite field version yields an uncertainty principle for specific  integer matrices via standard arguments, see e.g.~\cite[Proof of Thm.~1.1]{Tao} and \cref{ex:F5}. More surprisingly, a recent question in time-frequency analysis on Riesz bases of exponentials studies the principal minors of the complex Fourier matrix of size $pq$ and reduces this to the nonvanishing minors property of the Fourier matrix of size $p$ in characteristic $q$, see \cite{carageaarticle,loukaki}. Consequently, \cref{FourierfiniteGreater} also weakens the conditions of these results.

This paper is organized as follows: In \cref{sec:pre}, we recall the proof Chebotar\"ev's theorem given by Evans and Isaacs \cite{Evans} and slightly expand it in a way, such that it can be generalized directly to a real version in \cref{sect:real}. For this proof Schur polynomials are essential.
Now, the known proofs always use a divisibility argument, that is valid in $\Z$ but not in characteristic $q>0$. Thus a more delicate analysis is required in \cref{sec:finitefields}. From this analysis we also obtain a new proof of \cref{Chebotarev} that completely works in the ring $\Z(\o_p)$ and does not rely on a substitution $\o_p \mapsto 1$, which is a standard trick in the known proofs, see \cref{remarkProof}.

\section{Prerequisites}\label{sec:pre}
An important tool for our proofs are Schur polynomials. For $k\in\N$, a set $A=\{a_1,\ldots,a_k\} \in \binom{\{0,1,\ldots,n-1\}}{k}$, with $a_1<a_2<\ldots<a_k$, and any field $\K$ consider the generalized Vandermonde matrix
\begin{align}\label{eq:V}
V_A=V_A(x_1,\ldots,x_k)=
    \begin{pmatrix}
        x_1^{a_1} & x_2^{a_1} & x_3^{a_1} & \ldots & x_k^{a_1}\\
        x_1^{a_2} & x_2^{a_2} & x_3^{a_2} & \ldots & x_k^{a_2}\\
        \vdots & \vdots & \vdots & \ddots & \vdots \\
        x_1^{a_k} & x_2^{a_k} & x_3^{a_k} & \ldots & x_k^{a_k}
    \end{pmatrix}
    \in (\K[x_1,\ldots,x_k])^{k \times k},
\end{align}
and drop the subscript for $A=\{0,\hdots,k-1\}$.
For any matrix $M$, denote by $M_{A,B}$ the restriction of $M$ to the rows $A$ and the columns $B$. With this notation we observe $V_A(x_1,\ldots,x_k)=(V_{\{0,\ldots,n-1\}}(x_1,\ldots,x_n))_{A,\{1,\ldots,k\}}$.

The determinant $\det(V_A)$ vanishes if $(x_i=x_j)$ and thus $(x_j-x_i)| \det(V_A)$. This motivates the definition of the Schur polynomial $s_A(x_1,\ldots,x_k)$ as
\[
s_A(x_1,\ldots,x_k)=\frac{\det(V_A)}{\prod\limits_{1 \leq i < j \leq k}(x_j-x_i)}.
\]
The following results is due to Mitchell \cite{Mitchell}, but is stated in a modern way in \cite{Evans}.
\begin{lemma}[See \cite{Mitchell,Evans}]\label{Schurevaluation}
    Let $A \in \binom{\{0,1,\ldots,n-1\}}{k}$ be any index set. The Schur polynomial $s_A(x_1,\ldots,x_k) \in \Q[x_1,\ldots,x_k]$ has nonnegative coefficients and
    \[
    s_A(1,\ldots,1)=\frac{\prod\limits_{1\leq i < j \leq k}(a_j-a_i)}{\prod\limits_{1\leq i < j \leq k}(j-i)}.
    \]
\end{lemma}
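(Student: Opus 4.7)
The plan is to identify $s_A$ with a classical Schur polynomial indexed by a partition and to derive both assertions from there. First I would associate to $A=\{a_1<\ldots<a_k\}$ the partition
\[
\lambda_i = a_{k+1-i}-(k-i),\quad i=1,\ldots,k,
\]
which satisfies $\lambda_1\geq\ldots\geq\lambda_k\geq 0$, because the strict monotonicity of $A$ yields $a_{k+1-i}-a_{k-i}\geq 1$ and $a_1\geq 0$. The exponents $\lambda_i+k-i=a_{k+1-i}$ appearing in the Jacobi bialternant numerator $\det(x_j^{\lambda_i+k-i})$ are the reverse of $(a_1,\ldots,a_k)$, so this determinant differs from $\det(V_A)$ only by the sign $(-1)^{\binom{k}{2}}$. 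The same sign shows up when one compares the bialternant denominator $\det(x_j^{k-i})$ with the standard Vandermonde $\prod_{i<j}(x_j-x_i)$, and the two signs cancel. Consequently $s_A$ coincides with the classical Schur polynomial $s_\lambda(x_1,\ldots,x_k)$.

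For the nonnegativity of the coefficients I would invoke the combinatorial expansion
\[
s_\lambda(x_1,\ldots,x_k)=\sum_{T}\prod_{i=1}^{k}x_i^{m_i(T)},
\]
where $T$ ranges over semistandard Young tableaux of shape $\lambda$ with entries in $\{1,\ldots,k\}$ and $m_i(T)$ is the multiplicity of $i$ in $T$. Every term contributes coefficient $+1$, so the claim follows at once. A self-contained derivation of this identity can be given through the Lindström--Gessel--Viennot lemma applied to tuples of non-intersecting lattice paths starting and ending on a staircase; this combinatorial step is what I expect to be the main technical hurdle if one wants to avoid citing the theory of symmetric functions as a black box.

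For the evaluation at $(1,\ldots,1)$ my approach would be the standard principal specialization trick: substitute $x_j=q^{j-1}$ and let $q\to 1$. The numerator becomes the Vandermonde in the values $q^{a_i}$, namely $\prod_{i<j}(q^{a_j}-q^{a_i})$, while the denominator becomes $\prod_{i<j}q^{i-1}(q^{j-i}-1)$. Factoring $(q-1)^{\binom{k}{2}}$ out of each product and using the elementary identity $\lim_{q\to 1}(q^m-1)/(q-1)=m$ yields the claimed quotient. Since $s_A$ is already known to be a polynomial, this limit exists and equals $s_A(1,\ldots,1)$ by continuity, so no separate finiteness argument is required.
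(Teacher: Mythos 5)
The paper does not supply a proof of \cref{Schurevaluation}; it simply cites Mitchell and Evans--Isaacs, so there is no in-paper argument to compare against. Your proof is correct and complete. The bookkeeping in the identification $s_A = s_\lambda$ is done carefully: with $\lambda_i = a_{k+1-i}-(k-i)$ the exponents $\lambda_i+k-i$ reproduce $A$ in reverse order, the reversal sign $(-1)^{\binom{k}{2}}$ appears in both the bialternant numerator and denominator, and the two cancel, so $s_A$ really is the classical Schur polynomial $s_\lambda$. The nonnegativity then follows from the SSYT expansion (obtainable via Lindstr\"om--Gessel--Viennot if one wants to stay self-contained), and the evaluation at $(1,\ldots,1)$ follows from the principal specialization $x_j = q^{j-1}$: both products have exactly $\binom{k}{2}$ factors, so the $(q-1)^{\binom{k}{2}}$ cancels, the $q$-powers tend to $1$, and $\lim_{q\to 1}(q^m-1)/(q-1)=m$ delivers the quotient $\prod_{i<j}(a_j-a_i)/\prod_{i<j}(j-i)$. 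This is the standard textbook route and, as far as can be told, essentially the argument the cited sources use; no gaps.
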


These tools are already sufficient to prove \cref{Chebotarev}. We will provide a proof that might seem cumbersome at some points, but can directly be generalized to a real version.

\begin{proof}[Proof of \cref{Chebotarev}]
    Let $A,B \in \binom{\{0,1,\ldots,p-1\}}{k}$ be the indices of a submatrix
    \[
    M=
    \begin{pmatrix}
        \omega_p^{a_1b_1}&\o_p^{a_1b_2}& \ldots & \o_p^{a_1b_k}\\
        \o_p^{a_2b_1}&\o_p^{a_2b_2}& \ldots & \o_p^{a_2b_k}\\
        \vdots & \vdots & \ddots & \vdots\\
        \o_p^{a_kb_1}&\o_p^{a_kb_2}&\ldots&\o_p^{a_kb_k}
    \end{pmatrix}
    \]
    of the Fourier matrix $F_p$, with $\o_p=\exp(2 \pi i /p)$. We consider the Schur polynomial $s_A(x_1,\ldots,x_k)$ and observe that $\det(M)=0$ if and only if $s_A(\o_p^{b_1},\o_p^{b_2},\ldots,\o_p^{b_k})=0$. After the substitution 
    \[\alpha_B \colon \Q[x_1,\ldots,x_k] \to \Q[X] \text{, }x_l \mapsto X^{b_l},\]
    the multivariate Schur polynomial $s_A$ becomes an univariate polynomial $\alpha_B(s_A) \in \Q[X]$. The prior statement can be rewritten as $\alpha_B(s_A)(\o_p)=0$ if and only if $\det(M)=0$.
    Now, since $\Q[X]$ is a principal ideal domain, $\alpha_B(s_A)(\o_p)=0$ implies that $\alpha_B(s_A)$ is divisible by the minimal polynomial $\Phi_p(X)=\sum_{i=0}^{p-1}X^i$ of $\o_p$, i.e.,
    \begin{align}\label{factorcheb}
    \alpha_B(s_A)=\Phi_p(X) \cdot R(X),
    \end{align}
    for some $R \in \Q[X]$. Both polynomials, $\alpha_B(s_A)$ and $\Phi_p$, have integer coefficients and leading coefficient 1. Hence the same holds for $R$. Equality \cref{factorcheb} also holds true after another substitution $X \mapsto 1$ and yields
    \begin{align}\label{factorprime}
        \alpha_B(s_A)(1)=\underbrace{\Phi_p(1)}_{=p} \cdot R(1).
    \end{align}
Finally, we want to show that $\alpha_B(s_A)(1)$ is not divisible by the prime $p$, which contradicts $\det(M)=0$. Here we can use that the value $X=1$ has good properties under the substitution $x_l \mapsto X^{b_l}$, i.e.,
    \begin{align*}
    \alpha_B(s_A)(1)%\\
    =s_A(1,1,\ldots,1)=\frac{\prod\limits_{1\leq i < j \leq k}(a_j-a_i)}{\prod\limits_{1\leq i < j \leq k}(j-i)},
    \end{align*}
    by \cref{Schurevaluation}. The factors $(a_j-a_i)$ are all smaller than $p$ and cannot divide $p$. This contradicts equation \cref{factorprime} and hence $\det(M) \ne 0$.
\end{proof}

A key ingredient in the proof is that there exists a  substitution $\alpha_B \colon \Q[x_1,\ldots,x_k] \to \Q[X]$ with $\alpha(s_A) \in \Q[X]$ and $s_A(\o_p^{b_1},\ldots,\o_p^{b_k})=\alpha_B(s_A)(\o_p)$. This substitution only exists because of the polynomial connection between the roots $\o_p^{b_i}$ that is encoded in the fact that $\Q(\o_p)=\Q[X]/\Phi_p(X)$ is the splitting field of $\Phi_p(X)$.
%And for irreducible polynomials, this is equivalent to the Galois group being a cyclic group.
In this case we have $\operatorname{Gal}(\Q[X]/\Phi_p(X)|\Q)=(\Z/p)^\times$.
\begin{figure}[ht]
    \centering
        \[\begin{tikzcd}
            s_A(x_1,\ldots,x_k) \arrow{r}{x_l\mapsto X^{b_l}}\arrow[swap]{dr}{x_l \mapsto 1}  &    \alpha_B(s_A) \arrow{d}{X \mapsto 1} \\
             & \frac{\prod\limits_{1\leq i < j \leq k}(a_j-a_i)}{\prod\limits_{1\leq i < j \leq k}(j-i)}
            \end{tikzcd}
        \]
    \caption{The Substitutions of the Schur Polynomial in the proof of \cref{Chebotarev}.}
    \label{CommutingSchurDiagramm}
\end{figure}
Furthermore, we were able to evaluate at the point $x_1=\ldots=x_k=X=1$, here it was crucial that $1$ is a fixed point of the maps $x_l \mapsto X^{b_l}$ and the diagram in  \cref{CommutingSchurDiagramm} commutes.

\section{Real version of Chebotar\"ev's theorem}\label{sect:real}

For each prime $p>3$ the group $(\Z/p)^\times$ is the Galois group of the field extension $\Q \subseteq \Q[X]/\Phi_p(X)$ and has the subgroup $H=\{1,-1\}$. This subgroup $H$ is generated by the automorphism on $\Q[X]/\Phi_p(X)$ which is induced by the complex conjugation $i \mapsto -i$. By the Galois correspondence this induces the subfield
\[
\Q \subsetneq \Fix(H) \subsetneq \Q[X]/\Phi_p(X),
\]
of degree $\deg_{\Q}(\Fix(H))=\frac{p-1}{2}$. This field is generated as $\Q(\o_p+\o_p^{-1})=\Fix(H) \subseteq \R$ and has the cyclic Galois group
\[\Z_p^\times/H\cong \Z_{\frac{p-1}{2}},
\]
which is generated by the image of any generator $(\Z/p)^\times$ modulo $H$. It is $\o_p+\o_p^{-1}=2 \cos (2 \pi /p)$ and these elements are mapped to $(\o_p^j+\o_p^{-j})=2\cos(2 \pi j /p)$ by the Galois group of the cyclotomic field. Thus the elements
\[
(\o_p^{j}+\o_p^{-j})=2 \cos (2 \pi j/p), \quad j=1,\ldots,\frac{p-1}{2},
\]
are conjugate and share the same minimal polynomial
\[
P(X)=\prod_{j=1}^{(p-1)/2}(X-2\cos(2 j \pi /p)).
\]
The coefficients of this polynomial are symmetric in the cosine terms and hence, due to the Galois correspondence, must lie in the fixed field of the Galois group, i.e., the coefficients are rational. An explicit formula for the coefficients of this minimal polynomial is given in \cite{CosineMinimalPolynomial} by
    \[
    P(x)=\sum\limits_{0\leq k \leq \frac{n}{2}}(-1)^k\binom{n-k}{k}X^{n-2k}+\sum\limits_{0 \leq k < \frac{n}{2}}(-1)^k \binom{n-k-1}{k}X^{n-2k-1},
    \]
where $n=\frac{p-1}{2}$. However, this expression does not provide much additional insight into the structure of the minimal polynomial and will not be used later.
%The other roots of $P$ are trivially\todo{gefaehrliche Stelle - warum?} given by $(\o_p^j+\o_p^{-j})=2\cos(2 j\pi/p)$, $j=2,\ldots,(p-1)/2$, since they are induced by the action of the Galois group of the cyclotomic field $\o_p \mapsto \o_p^j$. We will not use the explicit description of the minimal polynomial but note that it is alternatively also given by \[
%P(X)=\prod_{j=1}^{(p-1)/2}(X-\cos(2 j \pi /p)).
%\]
In order to use the Schur polynomials, we need to understand the polynomial expression between the roots.
\begin{lemma}\label{Galoisgroupcosine}
    Let $p$ be a prime and let $P(X) \in \Q[X]$ be the minimal polynomial of $2\cos(2 \pi /p)$. The elements $\varphi_j \in \Q[X]/P(X)$ that are given by the recursion
    \begin{align*}
    \varphi_1(X)&=X,\\
    \varphi_2(X)&=X^2-2,\\
    \varphi_{j}(X)&=X \cdot \varphi_{j-1}(X)-\varphi_{j-2}(X),\qquad
    j=3,\ldots,\frac{p-1}{2}, 
    \end{align*}
    are to the roots of $P$. Viewed over the rationals, the polynomials $\varphi_j(X)$ have the common fixed point at $2$, i.e., $\varphi_j(2)=2$ for all $j$.
\end{lemma}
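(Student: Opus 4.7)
The plan is to recognize the three-term recursion defining $\varphi_j$ as the standard Chebyshev-type recursion scaled by a factor of $2$. Specifically, I will show by induction on $j$ that for every real $\theta$,
\[
\varphi_j(2\cos\theta) = 2\cos(j\theta).
\]
The two base cases follow immediately: $\varphi_1(2\cos\theta)=2\cos\theta$, and $\varphi_2(2\cos\theta) = 4\cos^2\theta - 2 = 2\cos(2\theta)$ by the double angle formula. For the inductive step, the product-to-sum identity
\[
2\cos\theta \cdot 2\cos((j-1)\theta) = 2\cos(j\theta) + 2\cos((j-2)\theta)
\]
rearranges to exactly the recursion $\varphi_j = X\varphi_{j-1} - \varphi_{j-2}$, so the induction closes.

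Once the identity $\varphi_j(2\cos\theta) = 2\cos(j\theta)$ is in hand, I would specialize $\theta = 2\pi/p$. In $\Q[X]/P(X)$ the class of $X$ represents $2\cos(2\pi/p)$, so the class of $\varphi_j(X)$ represents $2\cos(2\pi j/p)$. These are, as recorded in the paragraph preceding the lemma, precisely the $\tfrac{p-1}{2}$ roots of $P$, so the $\varphi_j$ enumerate them inside $\Q[X]/P(X)$.

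For the fixed-point claim, I would simply substitute $\theta = 0$ into the identity proved above, yielding $\varphi_j(2) = 2\cos(0) = 2$ for every $j$. Alternatively the same conclusion follows from a second easy induction using $\varphi_1(2)=2$, $\varphi_2(2)=2$, and the recursion $\varphi_j(2) = 2\cdot 2 - 2 = 2$.

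I do not anticipate a real obstacle here; the entire content of the lemma is the classical fact that twice the Chebyshev polynomials of the first kind satisfy this recursion and evaluate to $2\cos(j\theta)$ at $2\cos\theta$. The only point requiring a line of care is that $2 = 2\cos(0)$ lies outside the list $\{2\cos(2\pi j/p) : 1\le j\le \tfrac{p-1}{2}\}$ of roots of $P$, so the fixed-point statement is a statement in $\Q[X]$ rather than in $\Q[X]/P(X)$, which matches the phrasing ``viewed over the rationals'' in the lemma.
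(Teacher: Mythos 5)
Your proposal is correct and takes essentially the same approach as the paper: the paper's induction writes $X=\omega_p+\omega_p^{-1}$ and verifies $\varphi_j(X)=\omega_p^j+\omega_p^{-j}$ via $(\omega_p+\omega_p^{-1})(\omega_p^{j-1}+\omega_p^{1-j})-(\omega_p^{j-2}+\omega_p^{2-j})=\omega_p^j+\omega_p^{-j}$, which is exactly your product-to-sum identity $2\cos\theta\cdot 2\cos((j-1)\theta)=2\cos(j\theta)+2\cos((j-2)\theta)$ in exponential notation, and the fixed-point claim is handled by the same direct induction $\varphi_j(2)=2\cdot 2-2=2$.
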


\begin{proof}
    In the field $\Q[X]/P(X)$ we can use $X=(\o_p+\o_p^{-1})$ and we have
    \[X^2-2=\o_p^2+\o_p^{-2}.\]
    Induction on $j$ proves the first part, where we use
    \begin{align*}
        \varphi_j(X) &= (\o_p+\o_p^{-1}) \cdot (\o_p^{j-1}+\o_p^{1-j}) - (\o_p^{j-2}+\o_p^{2-j})
        = \o_p^j + \o_p^{-j}.
    \end{align*}
    Regarding the second part, we see that $\varphi_j(2)=2 \cdot \varphi_{j-1}(2)-\varphi_{j-2}(2) 
    =2 \cdot 2-2
    =2$.
    %\begin{align*}
    %\varphi_j(2)&=2 \cdot \varphi_{j-1}(2)-\varphi_{j-2}(2) 
    %=2 \cdot 2-2
    %=2.
    %\end{align*}
\end{proof}

Now, we evaluate the minimal polynomial $P(X)$ at the fixed point $2$ of the polynomials $\varphi_j$, $j=1,\ldots,\frac{p-1}{2}$.

\begin{lemma}\label{evaluationCosineMinPoly}
    Let $p$ be a prime. The minimal polynomial $P(X)$ of $2 \cos ( 2 \pi /p)$ has the evaluation $P(2)=p$.
\end{lemma}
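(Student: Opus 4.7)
The plan is to compute $P(2) = \prod_{j=1}^{(p-1)/2}\bigl(2 - (\omega_p^j + \omega_p^{-j})\bigr)$ directly by relating it back to the cyclotomic polynomial $\Phi_p$, thereby reducing the computation to the already-noted evaluation $\Phi_p(1)=p$ used in the proof of \cref{Chebotarev}.

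The key algebraic identity I would rely on is the ``Joukowski-type'' factorisation: for any nonzero $y$ and any integer $j$, one has
\[
(y+y^{-1}) - (\omega_p^j + \omega_p^{-j}) \;=\; y^{-1}\bigl(y-\omega_p^j\bigr)\bigl(y-\omega_p^{-j}\bigr),
\]
which is just the observation that the numerator of the left-hand side, after clearing $y$, factors as a quadratic with roots $\omega_p^{\pm j}$. Taking the product over $j=1,\ldots,(p-1)/2$ and using that $\{\omega_p^j, \omega_p^{-j} : j=1,\ldots,(p-1)/2\}$ is exactly the set of primitive $p$-th roots of unity, I obtain
\[
P(y+y^{-1}) \;=\; y^{-(p-1)/2} \prod_{j=1}^{p-1}\bigl(y-\omega_p^j\bigr) \;=\; y^{-(p-1)/2}\,\Phi_p(y).
\]
Setting $y=1$ (so that $y+y^{-1}=2$) immediately gives $P(2)=\Phi_p(1)=p$.

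The main step is really the factorisation identity above; once it is written down, the rest is bookkeeping, so I do not expect any serious obstacle. An alternative purely trigonometric route would be to rewrite $2-2\cos(2\pi j/p)=4\sin^2(\pi j/p)$ and invoke the classical product formula $\prod_{j=1}^{p-1}\sin(\pi j/p)=p/2^{p-1}$ together with the symmetry $\sin(\pi j/p)=\sin(\pi(p-j)/p)$; this yields the same answer but fits the algebraic spirit of the paper (and the recurrence in \cref{Galoisgroupcosine} linking $X=\omega_p+\omega_p^{-1}$) less cleanly than the $y\mapsto y+y^{-1}$ substitution.
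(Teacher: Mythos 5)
Your proof is correct, but it takes a genuinely different route from the paper. The paper proves $P(2)=p$ via Chebyshev polynomials of the first kind: it identifies the roots of $P$ with the interior local maxima of $2T_p(X/2)$, derives the factorization $2(T_p(X/2)-1)=(P(X))^2(X-2)$, and then evaluates $(P(2))^2=T_p'(1)=p^2$ by a limit, leaving a (harmless but real) sign to resolve. Your approach instead uses the purely algebraic identity $(y+y^{-1})-(\omega_p^j+\omega_p^{-j})=y^{-1}(y-\omega_p^j)(y-\omega_p^{-j})$ and the fact that $\{\omega_p^{\pm j}: j=1,\ldots,(p-1)/2\}$ enumerates all primitive $p$-th roots of unity, which gives the clean identity $P(y+y^{-1})=y^{-(p-1)/2}\Phi_p(y)$ and hence $P(2)=\Phi_p(1)=p$ by direct substitution at $y=1$. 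Your route avoids Chebyshev polynomials, avoids the limit, and sidesteps the sign ambiguity, which arguably fits the algebraic spirit of the surrounding section better; the paper's route, by contrast, makes visible the connection to the Chebyshev basis that is used later in the remarks on real phase retrieval. Both are complete proofs.
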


\begin{proof}
 The Chebyshev polynomials of first kind $T_p\colon[-1,1]\to\R$ are given by
 \begin{align*}
     T_p( x)=\cos(p\arccos x).
 \end{align*}
 The roots of
 \begin{align*}
  P(X)=\prod_{j=1}^{(p-1)/2}\left(X-2\cos\frac{2 j \pi}{p}\right)
 \end{align*}
 coincide with the local maxima of $2T_p(X/2)$ in $(-2,2)$, $2T_p(X/2)=2$, and $2T_p(X/2)$ is a monic polynomial of degree $p$. Hence we have the identity
 \begin{align*}
  2(T_p(X/2)-1)=(P(X))^2(X-2)    
 \end{align*}
 from which also
 \begin{align*}
 (P(2))^2
 =\lim_{y\to 1} \frac{T_p(y)-1}{y-1}=T_p'(1)=p\lim_{\theta \to 0}\frac{\sin p\theta}{\sin\theta} = p^2    
 \end{align*}
 follows.
\end{proof}

We now have all the tools to state and prove the main theorem of this section, which can be seen as a real version of Chebatar\"ev's theorem. 

\begin{theorem}\label{VandermondeCosine}
    Let $p$ be a prime. All minors of the Vandermonde matrix 
    \[
    \left(\left(\cos\left(\frac{2 \cdot j \cdot \pi}{p}\right)\right)^{m}\right)_{j,m=1,\ldots,\frac{p-1}{2}}
    \in\R^{\frac{p-1}{2}\times\frac{p-1}{2}}
    \]
    are nonzero.
\end{theorem}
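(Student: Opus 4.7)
My plan is to transport the proof of \cref{Chebotarev} into the real subfield $\Q(\omega_p+\omega_p^{-1})\cong\Q[X]/P(X)$, with \cref{Galoisgroupcosine} and \cref{evaluationCosineMinPoly} playing the roles that $\Phi_p(X)$ and $\Phi_p(1)=p$ played in the cyclotomic argument. The cases $p\in\{2,3\}$ are vacuous or immediate, so I assume $p\geq 5$. Since the matrix in the theorem and the matrix with entries $y_j^m$ (for $y_j:=\omega_p^j+\omega_p^{-j}=2\cos(2\pi j/p)$) differ only by a nonzero column scaling, it suffices to prove that all square submatrices of the latter have nonzero determinant.

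I would begin by fixing row and column index sets $A,B\in\binom{\{1,\ldots,(p-1)/2\}}{k}$ and writing
\[
\det\bigl((y_{b_i}^{a_j})_{i,j}\bigr)=s_A(y_{b_1},\ldots,y_{b_k})\cdot\prod_{1\leq i<j\leq k}(y_{b_j}-y_{b_i}).
\]
The Vandermonde factor is nonzero because $2\cos(2\pi\cdot/p)$ is injective on $\{1,\ldots,(p-1)/2\}$, so the task reduces to showing $s_A(y_{b_1},\ldots,y_{b_k})\neq 0$. Following the same pattern as in the cyclotomic proof, I introduce the substitution $\alpha_B\colon\Q[x_1,\ldots,x_k]\to\Q[X]$, $x_l\mapsto\varphi_{b_l}(X)$; by \cref{Galoisgroupcosine} its image at $X=y_1$ is exactly $s_A(y_{b_1},\ldots,y_{b_k})$, so if the latter vanished, the polynomial $\alpha_B(s_A)\in\Z[X]$ would have $y_1$ as a root and hence be divisible in $\Q[X]$ by the minimal polynomial $P(X)$. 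Monicity of $P(X)$ lifts this divisibility to $\Z[X]$, giving $\alpha_B(s_A)=P(X)\cdot R(X)$ with $R\in\Z[X]$.

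Next I would evaluate both sides of this identity at $X=2$, the common fixed point of the $\varphi_{b_l}$ supplied by \cref{Galoisgroupcosine}, producing the real analogue of the diagram in \cref{CommutingSchurDiagramm}:
\[
s_A(2,\ldots,2)=\alpha_B(s_A)(2)=P(2)\cdot R(2)=p\cdot R(2)\in p\Z,
\]
using \cref{evaluationCosineMinPoly}. On the other hand, $s_A$ is homogeneous of degree $\sum_r a_r-\binom{k}{2}$, so \cref{Schurevaluation} rewrites the left-hand side as
\[
s_A(2,\ldots,2)=2^{\sum_r a_r-\binom{k}{2}}\cdot\frac{\prod_{1\leq i<j\leq k}(a_j-a_i)}{\prod_{1\leq i<j\leq k}(j-i)}.
\]
All factors $a_j-a_i$ lie in $\{1,\ldots,(p-3)/2\}$, and $2$ is coprime to the odd prime $p$, so this positive integer is not divisible by $p$, contradicting the previous display.

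The main point where I expect to need care is the integrality of $R$: the argument fundamentally relies on $P(X)$ being monic with integer coefficients, which is immediate since $\omega_p+\omega_p^{-1}$ is an algebraic integer. Beyond that, the proof is a faithful real counterpart of the Chebotar\"ev argument, with the cyclotomic data $(\Phi_p,\,X^{b_l},\,X=1)$ systematically replaced by $(P,\,\varphi_{b_l}(X),\,X=2)$.
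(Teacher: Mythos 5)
Your proposal is correct and follows the same approach as the paper: rescale to the Vandermonde matrix with nodes $2\cos(2\pi j/p)$, substitute $x_l\mapsto\varphi_{b_l}(X)$ using \cref{Galoisgroupcosine}, use minimality of $P$ to conclude $P\mid\alpha_B(s_A)$, evaluate at the fixed point $X=2$ with \cref{evaluationCosineMinPoly} and \cref{Schurevaluation}, and derive a contradiction modulo $p$. Your explicit invocation of monicity to lift the factorization to $\Z[X]$ makes the final divisibility argument slightly more airtight than the paper's phrasing, but the route is identical.
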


\begin{proof}
    For notational convenience, we work with the Vandermonde matrix, see Equation \eqref{eq:V},
    \[
    V\left(2 \cdot \cos \left(\frac{2 \cdot j \cdot \pi}{p}\right) _{j=1,\ldots,\frac{p-1}{2}}\right),
    \]
    whose columns are a multiple of the original matrix and hence the minors show the same vanishing behaviour. Let $A,B \in \binom{[\frac{p-1}{2}]}{l}$ be the indices of the submatrix that is under consideration. This minor is zero if and only if $s_A(2\cos(2 b_1 \pi /p),\ldots,2\cos(2 b_l \pi /p))=0$. We break down the step from $s_A(x_1,\ldots,x_k)$ to $s_A(2\cos(2 b_1 \pi /p),\ldots,2\cos(2 b_l \pi /p))=0$ in two steps. The first step is the substitution
    \[\alpha \colon \Q[x_1,\ldots,x_l] \to \Q[X] \text{, } x_i \mapsto \varphi_{b_i}(X),\]
    with $\varphi_{b_i}$ from \cref{Galoisgroupcosine}. The second step is to look at the image of $\alpha_B(s_A)$ modulo the minimal polynomial $P$ of $2\cos(2 \pi /p)$.
    The minor is zero if and only if $P|\alpha_B(s_A)$ in $\Q[X]$. If this equation holds, it will also hold after another substitution $X \mapsto 2$. \cref{evaluationCosineMinPoly} yields $P(2)=p$ and \cref{Schurevaluation} yields
    \begin{align}\label{EqualityCosine1}
    \alpha_B(s_A)(2,\ldots,2) &= s_A(2,\ldots,2)\\
    \label{EqualityCosine2}&=2^d \cdot  s_A(1,\ldots,1) \\
    \label{EqualityCosine3}&= 2^d \cdot \frac{\prod\limits_{1\leq i < j \leq l}(a_j-a_i)}{\prod\limits_{1\leq i < j \leq l}(j-i)},
    \end{align}
    which is a contradiction to the divisibility by $p$, hence the minor must be nonzero. Equality \cref{EqualityCosine1} follows from the fact that $2$ is a fixed point of all $\varphi_k$, equality \cref{EqualityCosine2} follows from the fact that Schur polynomials are homogeneous of some degree $d$ and equality \cref{EqualityCosine3} is \cref{Schurevaluation}.
\end{proof}

\begin{example}
 Let $p=7$ and $t_1=\cos(2\pi/7)\approx 0.62$, $t_2=\cos(4\pi/7)\approx -0.22$, and $t_3=\cos(6\pi/7)\approx -0.90$ be given. Then all minors of the Vandermonde matrix
 \begin{align*}
  \begin{pmatrix}
    1 & t_1 & t_2^2\\
    1 & t_2 & t_2^2\\
    1 & t_3 & t_3^2
  \end{pmatrix}
 \end{align*}
 are nonzero, while (obviously) the matrix is not totally positive.
\end{example}

\begin{remark}\label{rem:Cos}
 Another real version can be infered from the classical Chebotar\"ev result.
    Let $p$ be a prime, set $r=(p-1)/2$, and
\begin{align*}
    F&=\left(\exp\frac{2\pi \mathrm{i} kj}{p}\right)_{k,j=-r,\hdots,r}\in\C^{p\times p},
    \qquad
    C%&=2\left((T_k\circ T_j)\left(\frac{\omega+\omega^{-1}}2\right)\right)_{k,j=1,\hdots,r}
    =\left(2\cos\frac{2\pi kj}{p}\right)_{k,j=1,\hdots,r}\in\C^{r\times r}.
\end{align*}
Now assume $\det(C_{J\times K})=0$ for some $J,K\in\binom{[r]}{s}$.
Then there exists a $\hat f=(\hat f_{-r},\hdots,\hat f_r)^{\top} \in\C^p$, $\hat f_k=\hat f_{-k}$, which fulfils 
\begin{align*}
 0=\sum_{k\in K} 2\hat f_k \cos\frac{2\pi kj}{p} = \sum_{k\in K\cup -K} \hat f_k \exp\frac{2\pi\mathrm{i} kj}{p}
\end{align*}
for $j\in J$, as well as for $j\in -J$ in the first equation since the cosine is even.
This implies $\det(F_{(J\cup -J)\times (K\cup -K)})=0$ which is false.

More general results on uncertainty principles for vectors with symmetry and related Chebotar\"ev theorems can be found in \cite{GaKaKa21}.
\end{remark}

Finally, we note in passing that polynomials that are either sparse in the monomial basis (using \cref{VandermondeCosine}) or the Chebyshev basis (using \cref{rem:Cos}) are uniquely determined by a minimal number of \emph{phaseless} evaluations. This is a direct consequence since the non-vanishing minors property implies the so-called complement property, which is equivalent to the uniqueness of the real phase-retrieval problem, see e.g.~\cite{BaCaEd06,BeEd22}.

\section{Chebotar\"ev's theorem over finite fields}\label{sec:finitefields}
Now let us once again consider the Fourier matrix $F_p$ of prime size, but this time over fields of characteristic $q$ for $q \in \N$ prime.
%over the finite field $\F_q$ for $q \in \N$ prime.
It is well known that, given two primes $p,q \in \Z$, the polynomial
\[
\overline{\Phi_p(X)}=\sum_{i=0}^{p-1}X^i \in \F_q[X]
\]
is not even necessarily irreducible. In fact $\overline{\Phi_p(X)}$ is irreducible if and only if $\ord_p(q)=p-1$.
This follows from the fact that, if $\Phi_p$ is irreducible, the roots of $\overline{\Phi_p(X)}$ are given by $X^{q^i} \in \F_q[X]/\Phi_(X)$, for $i=0,1,\ldots,p-2$, and the roots fulfill $X^p=1$. Thus the roots are given by
\[
X,X^{(q \mod p)},X^{(q^2 \mod p)},X^{(q^3 \mod p)},\ldots,X^{(q^{p-2} \mod p)},
\]
and these are repeated, if $\ord_p(q)=r< p-1$, which would contradict the separability of $\overline{\Phi_p(X)}$. In fact it is known that $\overline{\Phi_p(X)} \in \F_q[X]$ splits into $\frac{p-1}{r}$ many irreducible polynomials of degree $r$. The roots of all these polynomials are contained in the field $\F_{q^r}$.

\begin{example}
  For $p=7$ and $q=11$ we have $r=\ord_7(11)=3$ and the factorization
\[
(X^3+7X^2+6X+10)(X^3+5X^2+4X+10)=(X^6+X^5+X^4+X^3+X^2+X+1) \in \F_{11}[X].
\]
\end{example}
%In \cite{FiniteFieldChebFalse} it has been falsely stated that the criterion $\ord_p(q)=p-1$ is already sufficient for all minors of the Fouriermatrix $F_p$ over $\F_q$ to be nonvanishing.
Zhang's conditions \cite{Zhang}, $\ord_p(q)=p-1$ and  $q > \Gamma$, for some large $\Gamma$, imply the nonvanishing of the minors, which on polynomial side again can be written as
\[
\overline{\Phi_p(X)} \nmid \alpha_B(s_A(x_1,\ldots,x_k)) \in \F_q[X],
\]
for all $A,B \in \binom{\{0,1,\ldots,p-1\}}{k}$. The sufficient lower bound is given by
\[
\Gamma = \max_{A \subseteq \{0,1,\ldots,p-1\}} \frac{\prod_{i < j}(a_j-a_i)}{\prod_{i < j}(j-i)},
\]
and for $q > \Gamma$, the Euclidean division
\[
\alpha_B(s_A(x_1,\ldots,x_k)) = f(X) \cdot\Phi_p(X) + R(X) \in \Z[X]
\]
in combination with
\[
\alpha_B(s_A(x_1,\ldots,x_k))(1)=\frac{\prod_{i < j}(a_j-a_i)}{\prod_{i < j}(j-i)},
\]
yields that $q$ is too large to divide all the coefficients of $R(X)$ and thus $\overline{R(X)} \neq 0 \in \F_q[X]$. We want to improve the condition on $q$ with an approach that provides more insight into the problem. From now on, let $p,q$ be two distinct primes with $\ord_p(q)=r$ and denote $\Phi_p(X)=\sum_{i=0}^{p-1}X^i \in \Z[X]$ and $\overline{\Phi_p(X)}=\sum_{i=0}^{p-1}X^i \in \F_q[X]$. Since $\overline{\Phi_p(X)}$ is not necessarily irreducible, we denote by $\overline{P(X)} \in \F_q[X]$ an irreducible factor of $\overline{\Phi_p(X)}$ and by $P(X) \in \Z[X]$ the canonical preimage of $\overline{P(X)}$ in $\Z[X]$. Note that all the nontrivial $(p-1)$ many $p$-th roots of unity $\o_p$ will be contained in the field $\F_{q^r} \cong \F_q[X]/\overline{P(X)}$, with one explicit root given by $\omega_p=X$. Let
\begin{align}\label{def:FourierFiniteField}
F_p=V(1,\o_p,\o_p^2,\o_p^3,\ldots,\o_p^{p-1}) \in \left(\F_{q^r}\right)^{p \times p}.
\end{align}
be the Fourier matrix of size $p$ in characteristic $q$. This matrix always has entries in $\F_{q^r}$. 
\begin{example}
Again, for $p=7$ and $q=11$ we know that $\F_{11^3}\cong \F_{11}[X]/(X^3+7X^2+6X+10)$ is the splitting field of $\overline{\Phi_p(X)}$ and one explicit $7$-th root of unity is $\o_7=X$. The Fourier matrix is given by
\[
F_7=V(1,X,X^2,X^3,X^4,X^5,X^6) \in (\F_{11}[X]/(X^3+7X^2+6X+10))^{7 \times 7},
\]
with $X^3=4X^2+5X+1,$ $X^4=-X^2-X-4,$ $X^5=-5X^2-X-1$ and $X^6=X^2-4X-5$ in the field $\F_{11^3}=\F_{11}[X]/(X^3+7X^2-6X+10)$.    
\end{example}
Furthermore, for $B \in \binom{\{0,1,\ldots,p-1\}}{k}$ and $l \in \{1,\ldots,p-1\}$ we denote \[l \cdot B := \{ (l \cdot b) \mod p \colon b \in B\}.\]
The multiplication with $l$ is a bijection on $\F_p^\times$.
We start with a simple well known observation. 

\begin{lemma}\label{shiftIndicesCharq}
    Let $p,q \in \Z$ be two distinct primes with $\ord_p(q)=r$. 
    Let $F_p\in \F_{q^r}^{p \times p}$ be the Fourier matrix in characteristic $q$, see \cref{def:FourierFiniteField}.
    Let $A,B \in \binom{\{0,1,\ldots,p-1\}}{k}$ be two index sets. We have
    \[
    \det((F_p)_{A,B})=0
    \]
    if and only if
    \[
    \det((F_p)_{A,q^l \cdot B})=0
    \]
    for all $l=0,\ldots,p-2$.
\end{lemma}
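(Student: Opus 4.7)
The plan is to reinterpret the column reindexing $B\mapsto l\cdot B$ as a change of primitive $p$-th root of unity, and then invoke the Frobenius automorphism of $\F_{q^r}/\F_q$.

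First, I would rewrite the entries of $(F_p)_{A,l\cdot B}$ to expose the role of $l$. If one labels the columns along the bijection $b\mapsto lb\bmod p$ rather than by the sorted order of $l\cdot B$, the $(i,j)$-entry reads
\[
    \o_p^{a_i\cdot(lb_j)}=(\o_p^l)^{a_ib_j}.
\]
Restoring the canonical sorted order on $l\cdot B$ only permutes the columns, which changes the determinant by at most a sign and therefore does not affect vanishing. Thus $(F_p)_{A,l\cdot B}$ coincides, up to that sign, with the $(A,B)$-submatrix of the Fourier matrix built from the primitive $p$-th root of unity $\o_p^l\in\F_{q^r}$ in place of $\o_p$.

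Second, I would invoke the Frobenius $\phi\colon x\mapsto x^q$, which generates $\Gal(\F_{q^r}/\F_q)$ and acts on the primitive $p$-th roots of unity via $\o_p\mapsto\o_p^q$. Writing $l\equiv q^s\pmod{p}$, the automorphism $\phi^s$ sends $\o_p^{a_ib_j}$ to $(\o_p^l)^{a_ib_j}$ entry-wise on the submatrix, so that
\[
    \det\bigl((F_p)_{A,l\cdot B}\bigr)=\pm\,\phi^s\bigl(\det((F_p)_{A,B})\bigr).
\]
Since a field automorphism is bijective, the left side vanishes if and only if the right side does, which is the desired equivalence.

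The only real bookkeeping is tracking the permutation sign coming from sorting $l\cdot B$, and this is immaterial to the zero/nonzero dichotomy. The substantive point — and where the plan leans on the hypothesis $\ord_p(q)=r$ — is the requirement $l\in\langle q\rangle\le(\Z/p\Z)^\times$ so that some power of Frobenius actually realises the substitution $\o_p\mapsto\o_p^l$. When $r=p-1$, which is the case of primary interest for the remainder of the section, $\langle q\rangle$ exhausts all of $(\Z/p\Z)^\times$ and every $l$ is automatically covered by the argument above; for smaller $r$ one would additionally exploit the symmetry $F_p=F_p^\top$ together with the analogous identity $(F_p)_{A,l\cdot B}=\pm(F_p)_{l\cdot A,B}$ coming from $\o_p^{a\cdot lb}=\o_p^{(la)\cdot b}$ to reduce the remaining cases to a Galois conjugation.
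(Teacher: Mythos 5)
Your Frobenius argument is correct for $l\in\langle q\rangle\le(\Z/p\Z)^\times$ and is, in spirit, the mechanism the paper's own proof is aiming at (the printed identity $\sum_j c_j\omega_p^{(a-1)(lb_j-1)}=\omega_p^{(a-1)(l-1)b_j}\sum_j c_j\omega_p^{(a-1)(b_j-1)}$ cannot be read literally, since the purported prefactor still depends on $j$). You are also right to single out $l\notin\langle q\rangle$ as the problematic regime when $r=\ord_p(q)<p-1$. The proposed fallback, however, does not close the gap: the identity $(F_p)_{A,l\cdot B}=\pm(F_p)_{l\cdot A,B}$ merely transfers the scaling by $l$ from columns to rows, so $\det((F_p)_{A,B})=0\iff\det((F_p)_{l\cdot A,B})=0$ is exactly as hard as the original claim, and again no power of Frobenius realises $\omega_p\mapsto\omega_p^l$ for it.

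In fact the regime you flagged is not merely hard but fatal: the statement fails there. Take $p=7$, $q=2$, so that $r=\ord_7(2)=3$ and $\langle 2\rangle=\{1,2,4\}$. In $\F_8=\F_2[x]/(x^3+x+1)$ with $\omega_7=x$, and $A=B=\{0,1,3\}$, a direct computation gives
\begin{align*}
\det\bigl((F_7)_{\{0,1,3\},\{0,1,3\}}\bigr)&=\det\begin{pmatrix}1&1&1\\1&\omega_7&\omega_7^{3}\\1&\omega_7^{3}&\omega_7^{2}\end{pmatrix}=x^3+x+1=0,\\
\det\bigl((F_7)_{\{0,1,3\},\{0,2,3\}}\bigr)&=\det\begin{pmatrix}1&1&1\\1&\omega_7^{2}&\omega_7^{3}\\1&\omega_7^{6}&\omega_7^{2}\end{pmatrix}=x^2\neq 0,
\end{align*}
where the second column set is $3\cdot B$ and $3\notin\langle 2\rangle$. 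Equivalently, $\det(X^{a_ib_j})\equiv X(X+1)^2(X^3+X+1)\pmod{2,\,X^7-1}$ is divisible by the minimal polynomial $X^3+X+1$ of $\omega_7$ but not by the minimal polynomial $X^3+X^2+1$ of $\omega_7^{3}$. So the equivalence in the lemma holds only for $l\in\langle q\rangle$ (hence for all $l$ exactly when $r=p-1$); the paper's printed proof shares this defect, your Galois reading is the right diagnosis, and the fallback you sketch cannot repair the general case.
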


\begin{proof}
    The determinant $\det((F_p)_{A,B})$ is a polynomial expression in $\o_p$. The Galois group of the field extension $\F_p \subseteq \F_{q^r}$ is generated by $Y \mapsto Y^q$ and hence $\o_p,\o_p^q,\ldots,\o_p^{q^{p-2}}$ are conjugate. Thus the vanishing of the polynomials is equivalent.
\end{proof}

For our main results, from now on we consider two distinct primes $p,q$ with $\ord_p(q)=p-1$. Note the map
\[l \mapsto (q^l \mod p)\]
induces an isomorphism from $\{0,\ldots,p-2\}$ to $\{1,\ldots,p-1\}$.
\begin{figure}[h]
    \centering
        \[\begin{tikzcd}[row sep=large, column sep=large]
            \Z[X] \arrow{r}{ \mod \Phi_p(X)} \arrow{d}{\mod q}  &    \Z[X]/\Phi_p(X) \arrow{d}{\mod q} \\
             \F_q[X] \arrow{r}{\mod \overline{\Phi_p(X)}}  & \F_q[X]/\overline{\Phi_p(X)}
            \end{tikzcd}
        \]
    \caption{The $p$-th cyclotomic polynomial over $\Z$ and $\F_q$, with $\ord_p(q)=p-1$.}
    \label{fig:CyclotomicFinite}
\end{figure}
\newline
Consider the commuting diagram in \cref{fig:CyclotomicFinite}, we want to know whether 
\begin{align}\label{eq:finiteP|sA}
\overline{\Phi_p(X)} \mid \overline{\alpha_B(s_A(x_1,\ldots,x_k))} \in \F_q[X]/\overline{\Phi_p(X)}
\end{align}
for some $A,B \in \binom{\{0,1,\ldots,p-1\}}{k}$. We know by \cref{Chebotarev} that
\[
\Phi_p(X) \nmid \alpha_B(s_A(x_1,\ldots,x_k)) \in \Z[X]
\]
for all $A,B \in \binom{\{0,1,\ldots,p-1\}}{k}$. Euclidean division in $\Z[X]$ yields
\[
\alpha_B(s_A(x_1,\ldots,x_k)) = f(X) \cdot \Phi_p(X) + R(X),
\]
with $R\neq 0$ and thus equation \cref{eq:finiteP|sA} holds if $q$ divides all the coefficients of $R(X)$. The polynomial $R$ itself might be quite complicated, but averaging over well chosen column sets $B$ can be reasonably described. For two fixed sets $A,B \in \binom{\{0,1,\ldots,p-1\}}{k}$ denote by $m_{A,B}$ the number of indices $\mu$ of $s_A(x_1,\ldots,x_k)=\sum_{\mu \in T} c_\mu x^\mu$, such that $p$ divides $\sum_{j=1}^k\mu_jb_j$, i.e.,
\[
m_{A,B}=\sum_{\mu \in T} c_\mu \mathds{1}\left(p \big| \sum_{j=0}^k\mu_jb_j\right).
\]
Here $b_1<\ldots < b_k$ is the ascending list of the elements of $B$. By abuse of notation, we also write $\langle \mu , B \rangle=\sum_{j=1}^k\mu_jb_j$.

\begin{lemma}\label{EvaluationSumOfSchur}
    Let $p\in \Z$ be a prime and let $A,B \in \binom{\{0,1,\ldots,p-1\}}{k}$ be two sets.
    % and denote the Schur polynomial
    % \[
    % s_A(x_1,\ldots,x_k)=\sum_{\mu \in T} x^\mu.
    % \]
    We have
    \[
    \sum_{l=1}^{p-1}\alpha_{(l \cdot B)}(s_A(x_1,\ldots,x_k)) = m_{A,B} \cdot p - \frac{\prod_{i<j}(a_j-a_i)}{\prod_{i<j}(j-i)} \in \Z[X]/\Phi_p(X).
    \]
    %where $m$ is the number of indices $\mu$ of the Schur polynomial, that fulfill $p \mid \langle \mu, B\rangle$ (counted with multiplicity).
\end{lemma}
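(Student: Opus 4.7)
The plan is to expand the Schur polynomial in the monomial basis, apply the substitution, and then exploit two simple identities in $\Z[X]/\Phi_p(X)$: $X^p = 1$ (so exponents may be reduced mod $p$) and $\sum_{i=0}^{p-1} X^i = 0$.

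Write $s_A(x_1,\ldots,x_k) = \sum_{\mu \in T} c_\mu x^\mu$ with $x^\mu = x_1^{\mu_1}\cdots x_k^{\mu_k}$. By definition of $\alpha_{l \cdot B}$, and using that $X^p = 1$ in $\Z[X]/\Phi_p(X)$, I have
\[
\alpha_{l\cdot B}(s_A) = \sum_{\mu \in T} c_\mu \, X^{\,l\langle \mu, B\rangle \bmod p}.
\]
Summing over $l=1,\ldots,p-1$ and interchanging sums gives
\[
\sum_{l=1}^{p-1} \alpha_{l\cdot B}(s_A) = \sum_{\mu \in T} c_\mu \sum_{l=1}^{p-1} X^{\,l\langle \mu, B\rangle \bmod p}.
\]

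The key observation is that the inner sum splits into two cases according to the divisibility of $\langle\mu,B\rangle$ by $p$. If $p \mid \langle\mu, B\rangle$, every term is $X^0 = 1$, contributing $p-1$. If $p \nmid \langle\mu,B\rangle$, then $l \mapsto l\langle\mu,B\rangle$ is a bijection on $(\Z/p)^\times$, so the inner sum equals $\sum_{i=1}^{p-1} X^i = -1$ in $\Z[X]/\Phi_p(X)$. Rewriting $p-1 = p\cdot 1 - 1$, both cases combine into
\[
\sum_{l=1}^{p-1} X^{\,l\langle \mu, B\rangle \bmod p} = p \cdot \mathds{1}\!\left(p \mid \langle \mu, B\rangle\right) - 1.
\]

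Plugging this back in yields
\[
\sum_{l=1}^{p-1} \alpha_{l\cdot B}(s_A) = p \sum_{\mu \in T} c_\mu \mathds{1}\!\left(p \mid \langle \mu, B\rangle\right) - \sum_{\mu \in T} c_\mu = p \cdot m_{A,B} - s_A(1,\ldots,1),
\]
and the proof is finished by invoking \cref{Schurevaluation}, which identifies $s_A(1,\ldots,1)$ with the claimed ratio $\prod_{i<j}(a_j-a_i)/\prod_{i<j}(j-i)$. I do not anticipate a real obstacle here — the only subtlety is making sure the two identities $X^p=1$ and $\Phi_p(X)=0$ are applied in the right order so that the reduction mod $p$ of the exponents and the simplification $\sum_{i=1}^{p-1} X^i = -1$ both take place inside $\Z[X]/\Phi_p(X)$.
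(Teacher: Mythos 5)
Your proof is correct and follows essentially the same route as the paper's: expand the Schur polynomial in the monomial basis, interchange sums, split on whether $p \mid \langle \mu, B\rangle$, use the bijection $l \mapsto l\langle\mu,B\rangle$ on $(\Z/p)^\times$ together with $\sum_{i=1}^{p-1} X^i = -1$, and finish with \cref{Schurevaluation}. The only difference is cosmetic — you package the two cases into a single indicator-function formula before summing, whereas the paper keeps the two partial sums separate and then recombines.
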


\begin{proof}
    The ring $\Z[X]/\Phi_p(X)$ is a $\Z$-algebra and all substitutions $\alpha_{(q^l\cdot B)}$ are $\Z$-linear maps. Thus we observe
    \begin{align*}
        \sum_{l=1}^{p-1}\alpha_{(l \cdot B)}(s_A(x_1,\ldots,x_k))&=
        \sum_{l=1}^{p-1}\alpha_{(l \cdot B)}\left(\sum_{\mu \in T}c_\mu x^\mu\right) \\
        &= \sum_{l=1}^{p-1}\left( \sum_{\mu \in T} c_\mu\alpha_{(l \cdot B)} (x^\mu)  \right)\\
        &= \sum_{\mu \in T}c_\mu\left( \sum_{l=1}^{p-1} \alpha_{(l \cdot B)} (x^\mu)  \right).
    \end{align*}
We distinguish two cases, first $p | \langle \mu , B \rangle$ and second $p \nmid \langle \mu, B \rangle$. In the first case, we have
\begin{align*}
    \alpha_{B}(x^\mu) &= \prod_{j=1}^k X^{\mu_j b_j} %\\
    = X^{\sum_{j=1}^k \mu_j b_j} %\\
    = X^{\langle \mu , B \rangle} %\\
    = 1,
\end{align*}
since $X^p=1$. This yields 
\[\sum_{l=1}^{p-1}\alpha_{l\cdot B}(x^\mu)=\sum_{l=1}^{p-1}1=p-1.\]
In the second case we have $\langle \mu, B \rangle = z\neq   0 \mod p$. For any $l \in (\F_p)^\times$ we observe
\begin{align*}
    \alpha_{(l \cdot B)}&=\prod_{j=1}^kX^{\mu_j \cdot b_j\cdot l} %\\
    = X^{l \langle \mu , B \rangle} %\\
    = X^{l \cdot z}
\end{align*}
Now, since multiplication with $z$ is an isomorphism on $\F_p^\times$, we have
\begin{align*}
    \sum_{l=1}^{p-1}X^{l \cdot z} &= \sum_{l=1}^{p-1}X^l 
    = -1,
\end{align*}
in $\Z[X]/\Phi_p(X)$.
% Furthermore for any $l$ we have
% \begin{align*}
%     \alpha_{(q^l \cdot B)} &= \prod_{j=1}^k X^{\mu_j q^l b_j} \\
%     &= X^{q^l \langle \mu , B \rangle} \\
%     &= \left( X^z\right)^{q^l}.
% \end{align*}
% And, since $\ord_p(q)=p-1$ the occuring exponents $q^l$ for $l=0,\ldots,p-2$ modulo $p$ are exactly $1,2,\ldots,p-1$. Thus
% \begin{align*}
%     \sum_{l=0}^{p-2} (X^z)^{q^l} &= \sum_{l=1}^{p-1} X^{l \cdot z} \\
%     &= \sum_{l=1}^{p-1} X^l \\
%     &= -1,
% \end{align*}
% in $\Z[X]/\Phi_p(X)$.
There are, counted with multiplicity, $\alpha_B(s_A(x_1,\ldots,x_k))(1)=\frac{\prod_{i<j}(a_j-a_i)}{\prod_{i<j}(j-i)}$ many exponents in $s_A$. We finally conclude
\begin{align*}
    \sum_{\mu \in T}c_\mu\left( \sum_{l=1}^{p-1} \alpha_{(l \cdot B)} (x^\mu)  \right) &=
    \sum_{\mu \in T \colon p \mid \langle \mu, B \rangle} c_\mu\left( \sum_{l=1}^{p-1} \alpha_{(l \cdot B)} (x^\mu)  \right) + 
    \sum_{\mu \in T \colon p \nmid \langle \mu, B \rangle}c_\mu \left( \sum_{l=1}^{p-1} \alpha_{(l \cdot B)} (x^\mu)  \right) \\
    &= m_{A,B} \cdot (p-1) - \frac{\prod_{i<j}(a_j-a_i)}{\prod_{i<j}(j-i)} + m_{A,B} \\
    &= m_{A,B} \cdot p - \frac{\prod_{i<j}(a_j-a_i)}{\prod_{i<j}(j-i)}.
\end{align*}
\end{proof}

\begin{remark}\label{remarkProof}
    Note that the proof also works for every other list $b_1,\ldots,b_k$ of $B$. So in principle, one can choose the list of $B$ with the best properties. 

  Note further that \cref{EvaluationSumOfSchur} yields yet another proof for \cref{Chebotarev}, without an explicit evaluation at $X=1$. Since
\[
\frac{\prod_{i<j}(a_j-a_i)}{\prod_{i<j}(j-i)}
\]
is not divisible by $p$ in $\Z$, we have
\[
    \sum_{l=1}^{p-1}\alpha_{(l \cdot B)}(s_A(x_1,\ldots,x_k)) = m \cdot p - \frac{\prod_{i<j}(a_j-a_i)}{\prod_{i<j}(j-i)} \neq 0
\]
and thus $\alpha_B(s_A) \neq 0 \mod \Phi_p(X)$.

\end{remark}

Since the quantity $m_{A,B} \cdot p - \frac{\prod_{i<j}(a_j-a_i)}{\prod_{i<j}(j-i)} \in \Z[X]/\Phi_p(X)$ is an integer, it is easier to understand the rightmost map in \cref{fig:CyclotomicFinite}.

\begin{lemma}\label{thm:CommutingDiagrams}
    Let $p,q \in \Z$ be two distinct primes with $\ord_p(q)=p-1$. The diagram in \cref{fig:CyclotomicFinite} commutes.
\end{lemma}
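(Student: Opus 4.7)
The plan is to identify each of the two compositions of quotient maps with the canonical surjection onto a single common quotient ring, so that commutativity is automatic from the universal property of quotients. In other words, both diagrams are instances of the standard fact that quotients by ideals commute with each other, modulo a small amount of bookkeeping.

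For \cref{fig:CyclotomicFinite}, I would introduce the ideal $\mathfrak{a}:=(q,\Phi_p(X))\subseteq\Z[X]$. Going down-then-right, an element $f\in\Z[X]$ is first sent to its reduction $\bar f\in\F_q[X]$ and then to $\bar f\bmod\overline{\Phi_p(X)}$; by the third isomorphism theorem the target is canonically isomorphic to $\Z[X]/\mathfrak{a}$, and under this isomorphism the image of $f$ is the residue class $f+\mathfrak{a}$. Going right-then-down, $f$ is first sent to $f+(\Phi_p(X))$ and then reduced modulo $q$; again by the third isomorphism theorem the target is canonically $\Z[X]/\mathfrak{a}$, with $f$ mapping to the same class $f+\mathfrak{a}$. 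Commutativity follows.

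For \cref{fig:CyclotomicFiniteSmallOrder} the same strategy works with $\mathfrak{a}:=(q,P(X))\subseteq\Z[X]$, provided I first check that $\Phi_p(X)\in\mathfrak{a}$. This is where a (mild) observation is needed: since $P(X)\in\Z[X]$ is the canonical preimage of the irreducible factor $\overline{P(X)}$ of $\overline{\Phi_p(X)}$, there is a factorisation $\overline{\Phi_p(X)}=\overline{P(X)}\cdot\overline{Q(X)}$ in $\F_q[X]$, which lifts to an identity
\[
\Phi_p(X)=P(X)Q(X)+q\cdot h(X)\qquad\text{in }\Z[X]
\]
for suitable $Q,h\in\Z[X]$. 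Hence $\Phi_p(X)\in(q,P(X))=\mathfrak{a}$, so $(q,\Phi_p(X),P(X))=\mathfrak{a}$, and the ideal $\langle q,P(X)\rangle$ appearing on the right edge of the diagram is exactly the image of $\mathfrak{a}$ in $\Z[X]/\Phi_p(X)$. The same two applications of the third isomorphism theorem then identify both compositions with the canonical projection $\Z[X]\twoheadrightarrow\Z[X]/\mathfrak{a}\cong\F_q[X]/(\overline{P(X)})$.

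I do not anticipate a substantial obstacle: the only content beyond formal nonsense is the lift of $\overline{P(X)}\mid\overline{\Phi_p(X)}$ to the integral identity above, and this is immediate from the fact that $P$ is chosen as the canonical preimage. Everything else is the standard compatibility of quotients by commuting ideals.
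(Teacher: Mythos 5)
Your proof is correct and takes essentially the same approach as the paper: both compare the kernels of the two compositions, with the crucial step being that $\Phi_p(X)\in(q,P(X))\subseteq\Z[X]$, which follows from $\overline{P(X)}\mid\overline{\Phi_p(X)}$ in $\F_q[X]$. Your write-up is in fact slightly more careful, since you explicitly identify both compositions with the canonical projection $\Z[X]\twoheadrightarrow\Z[X]/\mathfrak{a}$ via the third isomorphism theorem, whereas the paper only asserts that the kernels agree (which alone would not force two surjections to coincide, but is harmless here because every arrow is a canonical quotient map).
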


\begin{proof}
    Both maps are the canonical maps with kernel $\langle \{q , \Phi_p(X)\}\rangle \subseteq \Z[X]$.

    % If $\ord_p(q)=r<p-1$, the kernel of the two compositions are $\langle\{ q,P(X)\}\rangle$ for the map $\Z[X] \to \F_q[X] \to \F_q[X]/\overline{P(X)}$ and $\langle \{\Phi_p(X),q,P(X)\}\rangle$ for the map $Z[X] \to \Z[X]/\Phi_p(X) \to F_q[X]/\overline{P(X)}$. Since
    % \[
    % q \mid  \Phi_p(X) \mod P(X)
    % \]
    % we have $\Phi_p(X) \in \langle \{q,P(X)\} \rangle$, thus the kernel are the same and the diagram commutes.
\end{proof}

\begin{theorem}\label{FourierFiniteFixedSets}
    Let $p,q \in \Z$ be two distinct primes with $\ord_p(q)=p-1$.
    Let $F_p \in \F_{q^r}^{p \times p}$ be the Fourier matrix in characteristic $q$, see \cref{def:FourierFiniteField}, and let $A,B \in \binom{\{0,1,\ldots,p-1\}}{k}$ be two index sets. If
    \[
    q \nmid \left( m_{A,B} \cdot p - \frac{\prod_{i<j}(a_j-a_i)}{\prod_{i<j}(j-i)}\right),
    \]
    we have $\det((F_p)_{A,B})\neq 0 \in \F_{q^r}$.
\end{theorem}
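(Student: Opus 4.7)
The plan is a short contradiction argument that collapses the vanishing of one minor to a divisibility statement about a single explicit integer, using every lemma of the section in succession.

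First I would assume $\det((F_p)_{A,B})=0\in\F_{q^r}$ and apply \cref{shiftIndicesCharq} to propagate this to $\det((F_p)_{A,l\cdot B})=0$ for every $l\in(\F_p)^\times$. Translating back into the Schur-polynomial language used in \cref{sec:pre}, this says that $\alpha_{l\cdot B}(s_A)\in\Z[X]$ vanishes at $\o_p\in\F_{q^r}$ for each $l\in(\F_p)^\times$. Since $\overline{P(X)}$ is the minimal polynomial of $\o_p$ over $\F_q$, this is equivalent to the $p-1$ congruences $\alpha_{l\cdot B}(s_A)\equiv 0 \pmod{\overline{P(X)}}$ in $\F_q[X]$, or equivalently $\alpha_{l\cdot B}(s_A)\equiv 0$ in $\F_q[X]/\overline{P(X)}$.

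Next I would sum those $p-1$ relations over $l$ and invoke \cref{EvaluationSumOfSchur}, which in $\Z[X]/\Phi_p(X)$ collapses the sum to the integer
\[
N \;:=\; m_{A,B}\cdot p \;-\; \frac{\prod_{i<j}(a_j-a_i)}{\prod_{i<j}(j-i)}.
\]
The commutativity established in \cref{thm:CommutingDiagrams} (in the form of \cref{fig:CyclotomicFinite} if $r=p-1$ or \cref{fig:CyclotomicFiniteSmallOrder} otherwise) ensures that the image of the sum in $\F_q[X]/\overline{P(X)}$ can equally well be computed by first reducing each summand modulo $q$ and $\overline{P(X)}$, which yields $0$, or by first passing to $\Z[X]/\Phi_p(X)$ and then to $\F_q[X]/\overline{P(X)}$, which yields the image of $N$. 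Hence $N\equiv 0$ in $\F_q[X]/\overline{P(X)}$. Because $N\in\Z$ already lives in the scalar subfield $\F_q\hookrightarrow \F_q[X]/\overline{P(X)}=\F_{q^r}$, this forces $q\mid N$, contradicting the hypothesis of the theorem.

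The one delicate point worth watching is precisely the commutativity of the right-hand square in the regime $r<p-1$: one needs the two natural surjections $\Z[X]\to\F_q[X]/\overline{P(X)}$ to have the same kernel, which amounts to $\Phi_p(X)\in\langle q,P(X)\rangle\subseteq\Z[X]$. That fact is exactly what \cref{thm:CommutingDiagrams} supplies. Once it is in hand, no further estimates are necessary and the whole argument fits in a handful of lines.
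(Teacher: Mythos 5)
Your proof is correct and follows essentially the same route as the paper's: it reduces the vanishing of one minor to the vanishing of all shifted minors via \cref{shiftIndicesCharq}, sums over the shifts using \cref{EvaluationSumOfSchur}, and then uses the commuting diagrams of \cref{thm:CommutingDiagrams} to conclude $q\mid N$. The only (minor, cosmetic) difference is that you handle the two regimes $r=p-1$ and $r<p-1$ uniformly through the diagram lemma and the observation that $N$ is a scalar in $\F_q\hookrightarrow\F_{q^r}$, whereas the paper splits into two explicit cases to make the same point; this slightly streamlines the write-up but is not a genuinely different argument.
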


\begin{proof}
    Assume that $\det((F_p)_{A,B})=0 \in \F_{q^r}$, by \cref{shiftIndicesCharq} we know that
    $\det((F_p)_{A,l \cdot B})=0$ for all $l \in (\F_p)^\times$. Furthermore let $R_l(X) \in \Z[X]/\Phi_p(X)$ be the remainder of $\alpha_{l\cdot B}(s_A(x_1,\ldots,x_k))$ modulo $\Phi_p(X)$.
    We have $\det((F_p)_{A,l \cdot B})=0$ if and only if $q$ divides every coefficient of $R_l$ for every $l$. Hence we also know that $q$ divides the coefficients of the sum
    \[
    \sum_{l=1}^{p-1} R_l(X) = \left( m \cdot p - \frac{\prod_{i<j}(a_j-a_i)}{\prod_{i<j}(j-i)}\right),
    \]
    where the equality follows from \cref{EvaluationSumOfSchur}. This is a contradiction to our assumption and hence $\det((F_p)_{A,B}) \neq 0$.
    
    % In the second case we have $\det((F_p)_{A,B})=0$ if and only if $R_l =0 \mod\{ \langle q, P(X)\}\rangle$ and consequently also
    % \[
    % \sum_{l=1}^{p-1} R_l(X) = \left( m \cdot p - \frac{\prod_{i<j}(a_j-a_i)}{\prod_{i<j}(j-i)}\right) = 0 \mod \langle q,P(X) \rangle.
    % \]
    % Since $|m_{A,B}\cdot p-s_A(1,\ldots,1)|$ is an integer that clearly is not contained in the ideal $\langle q \rangle$ by the condition, it will be mapped to a nonzero element of degree zero in $\F_q[X]$. Thus it is not an element of the kernel $\langle \overline{P(X)}\rangle$.
\end{proof}

%For two index sets $A,B \in \binom{\{0,1,\ldots,p-1\}}{k}$, we denote by $m_{A,B}$ the number of exponents $\mu$ of $s_A(x_1,\ldots,x_k)$ with $p | \langle \mu, B \rangle$.
Our main result now follows directly.

\begin{Corollary}\label{FourierfiniteGreater}
    Let $p,q \in \Z$ be two distinct primes with $\ord_p(q)=p-1$.
    Let $F_p \in \F_{q^r}^{p \times p}$ be the Fourier matrix in characteristic $q$, see \cref{def:FourierFiniteField}.
    If
    \[
    q > \max_{A,B \subseteq \{0,1,\ldots,p-1\}, \#A=\#B} \left| m_{A,B} \cdot p - \frac{\prod_{i<j}(a_j-a_i)}{\prod_{i<j}(j-i)}\right|,
    \]
    then all minors of $F_p$ are nonzero.    
\end{Corollary}

\begin{proof}
    By \cref{FourierFiniteFixedSets} we need to show that
    \[
    q \nmid  \left( m_{A,B} \cdot p - \frac{\prod_{i<j}(a_j-a_i)}{\prod_{i<j}(j-i)}\right),
    \]
    for all $A,B \subseteq \{0,1,\ldots,p-1\}$, with $\#A=\#B$. This is trivially the case under the condition on $q$ and the statement follows.
\end{proof}

%For a fixed $B \in \binom{\{0,1,\ldots,p-1\}}{k}$, the condition $p | \langle \mu , B \rangle$ is equal to the condition of the vectors $\mu , B \in \F_p^k$ being orthogonal. For $s_A(1,\ldots,1)$ many uniformely i.i.d. random vectors $\xi_i \in \F_p^k$, we have $\P(\langle B, \xi_i\rangle =0)=\frac{1}{p} $. And consequently, we expect $\frac{s_A(1,\ldots,1)}{p}$ of these random vectors to be orthogonal to $B$. Thus, for uniformely distributed vectors, the quantity in \cref{FourierfiniteGreater} has expected value
% \[
% \frac{s_A(1,\ldots,1)}{p}\cdot p - s_A(1,\ldots,1) = 0.
% \]
% Note that the coefficients $\mu $ of the Schur polynomials are not uniformely distributed and the computation above is just a heuristic.

\begin{example}
    For $p=7$ we have
\[
\max_{A,B \subseteq \{0,1,\ldots,p-1\}, \#A=\#B} \left| m_{A,B} \cdot p - \frac{\prod_{i<j}(a_j-a_i)}{\prod_{i<j}(j-i)}\right|=8
\]
and thus \cref{FourierfiniteGreater} yields that the Fourier matrix $F_7$ has nonvanishing minors in characteristic $q$ for every prime $q>8$ with $\ord_7(q)=6$. In comparison, for $p=7$ \cite[Theorem A]{Zhang} yields this property of $F_7$ in characteristic $q$ for any prime $q\ge 75$ and $\ord_7(q)=6$.

\begin{table}[h!]
    \centering
    \begin{tabular}{l|c c c c c c}
         $p$ & 2 & 3 & 5 & 7 & 11 & 13 \smallskip \\
         \hline
         $q$ via \cref{FourierfiniteGreater} & 3 & 2 & 7 & 17 & 193&1619\\
         $q$ via \cite{PfCaLeMa25} & 3 & 2 & 11 & 79 & 105\,863 & 11\,709\,007\\
         $q$ via \cite{Zhang} & 3 & 2 & 13 & 89 & 105\,871 & 11\,709\,007\\
    \end{tabular}
    \caption{First prime $q$ for which $F_p$ has nonvanishing minors in characteristic $q$ according to \cref{FourierfiniteGreater}, the recent improvement \cite{PfCaLeMa25} of \cite{Zhang}, and \cite[Theorem A]{Zhang}.}
    \label{tab:my_label}
\end{table}
\end{example}
%A={0,2,4,6,9,11}
%B={0,1,2,6,7,8}

Removing the order condition has its own charme, in particular, if $q \equiv 1 \mod p$, the Fourier matrix $F_p$ has entries in the field $\F_q$. In this case, we can use the results from \cite{PfCaLeMa25}, where the order condition has been removed.
\begin{example}\label{ex:F5}
 For $p=5$ and $q=11$, we have $\ord_5(11)=1$ and thus already $\F_{11}$ contains 5-th roots of unity, e.g. $\omega_5=3$.
 According to \cite{PfCaLeMa25}, the Fourier matrix 
 \begin{align*}
 F_5=
 \left(
\begin{array}{ccccc}
1 & 1 & 1 & 1 & 1 \\
1 & 3 & 9 & 5 & 4 \\
1 & 9 & 4 & 3 & 5 \\
1 & 5 & 3 & 4 & 9 \\
1 & 4 & 5 & 9 & 3 \\
\end{array}
\right)\in\F_{11}^{5\times 5}
% =
% \left(
% \begin{array}{ccccc}
% 1 & 1 & 1 & 1 & 1 \\
% 1 & 3 & -2 & 5 & 4 \\
% 1 & -2 & 4 & 3 & 5 \\
% 1 & 5 & 3 & 4 & -2 \\
% 1 & 4 & 5 & -2 & 3 \\
% \end{array}
% \right)
 \end{align*}
 has no vanishing minors in characteristic $11$. This also implies that every square submatrix is invertible over $\R$ and via standard arguments \cite{Tao} the uncertainty principle
 %\begin{align*}
   $  \|g\|_0 + \|F_5 g\|_0\ge 6$
 %\end{align*}
 for all $g\in\R^5$, where $\|\cdot\|_0$ denotes the number of nonzero entries of a vector.
 %Additional, restricting to any set of $s$ rows yields a so-called full spark integer frame of $\R^s$
\end{example}

\subsection{Characteristic $q$ not primitive modulo $p$}

If we consider two distinct primes $p,q$ with $\ord_p(q)=r<p-1$, we cannot prove explicit bounds as in \cref{FourierfiniteGreater}. We start with a similar result as \cref{EvaluationSumOfSchur}, but for the nonprimitive case. This requires some technical definition.

% For $r=p-1$ nothing changes. If $r<p-1$, we cannot consider the sum
% \[
% \sum_{l=1}^{p-1} \alpha_{l \cdot B}(s_A(x_1,\ldots,x_k))=\sum_{l=0}^{p-2}\alpha_{q^l \cdot B} (s_A(x_1,\ldots,x_k))
% \]
% anymore. Instead we consider
% \[
% \sum_{l=0}^{r-1} \alpha_{q^l\cdot B}(s_A(x_1,\ldots,x_k))
% \]
% and compute its residue in $\Z[X]/P_1(X)$.

% Consider the commuting diagram in \cref{fig:CyclotomicFinite}. We want to know whether 
% \begin{align}\label{eq:finiteP|sA}
% \overline{\Phi_p(X)} \mid \overline{\alpha_B(s_A(x_1,\ldots,x_k))} \in \F_q[X]/\overline{\Phi_p(X)}
% \end{align}
% for some $A,B \in \binom{\{0,1,\ldots,p-1\}}{k}$. We know by \cref{Chebotarev} that
% \[
% \Phi_p(X) \nmid \alpha_B(s_A(x_1,\ldots,x_k)) \in \Z[X]
% \]
% for all $A,B \in \binom{\{0,1,\ldots,p-1\}}{k}$. Euclidean division in $\Z[X]$ yields
% \[
% \alpha_B(s_A(x_1,\ldots,x_k)) = f(X) \cdot \Phi_p(X) + R(X),
% \]
% with $R\neq 0$ and thus equation \cref{eq:finiteP|sA} holds if $q$ divides all the coefficients of $R(X)$. The polynomial $R$ itself may be quite complicated, but averaging over well chosen column sets $B$ can be reasonably described.

% In the case $\ord_p(q)=p-1$ there was \cref{EvaluationSumOfSchur}, and we want to prove a similar result in the non primitive case. We once again sum op the substitutions of Schur polynomials. This will be part of the ring $\F_q \subseteq \F_{q^r}$. 

\begin{definition}
    Let $p,q$ be two distinct primes with $r=\ord_p(q)$. Let $\F_q[X]/\overline{P(X)}$ be the $p$-th cyclotomic field over $F_q$. For $i=1,\ldots,p-1$ we know that the element $X^i$ is a root of unity and by $L_{(X^{i})}$ we denote the trace of the element $X^i$, i.e., the coefficient of the second highest monomial in the minimal polynomial of $X^i$ over $\F_q[X]$.
    
    Since $\langle q \rangle \subseteq (\F_p)^\times$ is a proper subgroup, we can look at the quotient group $(\F_p)^\times / \langle q \rangle$. This group consists of $(p-1)/r$ many cosets, where we denote a set of representatives by $n_1,\ldots,n_{(p-1)/r}$.
    
    Furthermore, by $m_{A,B}^{(i)}$, we denote the number of exponents $\mu$ of $s_A(x_1,\ldots,x_k)=\sum_{\mu \in T}x^\mu$, such that $i \equiv \langle \mu, B \rangle \in (\F_p)^\times / \langle q \rangle$.
\end{definition}
\begin{example}
    Let $p=7,q=2$, with $\ord_p(q)=3$. In $\F_2[X]$ we have
    \[
    \overline{\Phi_7(X)}=(X^3+X+1) \cdot (X^3+X^2+1)
    \]
    For $A=\{0,1,3\}$ it is $s_A(x_1,x_2,x_3)=x_1+x_2+x_3$ and the $\mu$ are given by $(1,0,0)$, $(0,1,0)$ and $(0,0,1)$. The cosets of the subgroup generated by $q=2$ in $\F_7^\times$ are $\{1,2,4\}$ and $\{3,5,6\}$. If we construct $F_{2^3}$ as $\F_2[X]/(X^3+X+1)$, we have $L_{X}=L_{X^2}=L_{X^4}=0$ and $L_{X^3}=L_{X^5}=L_{X^6}=1$. Lastly, for $B=\{0,2,4\}$ we have $m^{(0)}_{A,B}=1$, $m^{(1)}_{A,B}=2$ and $m^{(3)}_{A,B}=0$.
\end{example}

\begin{lemma}\label{lemmanonprim}
    Let $p,q\in \Z$ be two distinct primes, $r=\ord_p(q)$ and $A,B \in \binom{\{0,1,\ldots,p-1\}}{k}$ be two sets.
    % and denote the Schur polynomial
    % \[
    % s_A(x_1,\ldots,x_k)=\sum_{\mu \in T} x^\mu.
    % \]
    We have
    \[
    \sum_{l=0}^{r-1}\alpha_{(q^l \cdot B)}(s_A(x_1,\ldots,x_k)) =r \cdot m^{(0)}_{A,B} - \sum_{i=1}^{(p-1)/r}m^{(n_i)}_{A,B} \cdot L_{(x^{n_i})} \in \F_q[X]/\overline{P(X)}.
    %m_{A,B} \cdot p - \frac{\prod_{i<j}(a_j-a_i)}{\prod_{i<j}(j-i)} \in \Z[X]/\Phi_p(X).
    \]
    %where $m$ is the number of indices $\mu$ of the Schur polynomial, that fulfill $p \mid \langle \mu, B\rangle$ (counted with multiplicity).
\end{lemma}

\begin{proof}
    By the same argumentation as before, we obtain
    \[
    \sum_{l=0}^{r-1}\alpha_{(q^l\cdot B)}\left(\sum_{\mu \in T} c_\mu x^\mu \right)= \sum_{\mu \in T} c_\mu \sum_{l=0}^{r-1}\alpha_{(q^l\cdot B)}(x^\mu) \in \Z[X]
    \]
    The number $m^{(0)}_{A,B}$ denotes the number of indices $\mu$ with $p | \langle \mu,B\rangle$. In this case we have 
    %\begin{align*}
    $\alpha_{(q^l\cdot B)}(x^\mu) = 1$
    %\end{align*}
    in $\Z[X]/\Phi_p(X)$. And thus
    \[
    \sum_{l=0}^{r-1}\alpha_{(q^l\cdot B)}(x^\mu)=r \in \F_q[X]/\overline{P(X)}.
    \]
    Now let $i \equiv \langle \mu, B\rangle \in (\F_p)^\times$, we obtain
    \begin{align}
        \alpha_{(q^l\cdot B)}(x^\mu) &= X^i \in \Z[X]/\Phi_p(X),
    \end{align}
    and furthermore
    \begin{align*}
        \sum_{l=0}^{r-1}\alpha_{(q^l\cdot B)}(x^\mu)&=\sum_{l=0}^{r-1}(X^{i})^{q^l}
    \end{align*}
    The elements $(X^{i})^{q^l} \in \F_{q^r}$ are conjugate, and there is no conjugate element missing in this list, and thus the sum yields the negative trace of this element
    \begin{align*}
        \sum_{l=0}^{r-1}(X^{i})^{q^l} &= - L_{(X^i)},
    \end{align*}
    which can be deduced from the fact that the polynomial
    \[
    \prod_{l=0}^{r-1}(Y-(X^i)^{q^l})
    \]
    is the minimal polynomial of all of these elements over $\F_q$. Lastly, since the sum iterates through all conjugate roots, the result is the same for each $j \in \F_p^\times$ that is conjugate to $i$ modulo the subgroup $\langle q \rangle$.
\end{proof}

\begin{figure}[b]
    \centering
        \[\begin{tikzcd}[row sep=large, column sep=large]
            \Z[X] \arrow{r}{ \mod \Phi_p(X)} \arrow{d}{\mod q}  &    \Z[X]/\Phi_p(X) \arrow{d}{\mod \langle q ,P(X)\rangle} \\
             \F_q[X] \arrow{r}{\mod \overline{P(X)}}  & \F_q[X]/\overline{P(X)}
            \end{tikzcd}
        \]
    \caption{The $p$-th cyclotomic polynomial over $\Z$ and $\F_q$, with $\ord_p(q)=r<p-1$.}
    \label{fig:CyclotomicFiniteSmallOrder}
\end{figure}

For $\ord_p(q)=r<p-1$, the diagram in \cref{fig:CyclotomicFiniteSmallOrder} looks a bit different than \cref{fig:CyclotomicFinite}. There still is a map $\varphi \colon \Z[X]/(\Phi_p(X)) \to \F_q[X]/\overline{P(X)}$ but with a larger kernel $\langle \{q,P(X)\}\rangle$.

The main problem with this approach under the condition $\ord_p(q)<p-1$ is that
the factorization of $\Phi_p(X)$ is dependent on $q$. Especially the traces of the elements $X^i \in \F_{q^r}$ are not known, while for $\ord_p(q)=p-1$ we know that the trace is $1$, independent of $q$. Thus any bound arising from \cref{lemmanonprim} will be still dependent on $q$.

%In both cases, we have a good control over $\sum_{l=1}^{p-1} \alpha_{(l \cdot B)}(s_A) \in \Z[X]/\Phi_p(X)$ and the image of this element in the cyclotomic field $\F_{q^r}$.

\section*{Acknowledgement}
We sincerely thank Romanos Malikiosis for poining out a mistake in the previous version of Lemma 12 which had resulted in a flawed version of Corollary 17 for $\ord_p(q)<p-1$. The corrected version now is stated for the primitive case $\ord_p(q)=p-1$ only and Section 4.1 partially extends our approach to the nonprimitive case $\ord_p(q)<p-1$.

%\bibliographystyle{abbrvurl}
%\bibliography{refs}
%\input{main.bbl}

\bibliographystyle{abbrvurl}
\bibliography{refs}

\end{document}